\renewcommand\eqref[1]{(\ref{#1})} 
\numberwithin{equation}{section}
\theoremstyle{plain}
\newtheorem{theorem}{Theorem}[section]
\newtheorem{proposition}[theorem]{Proposition}
\newtheorem{corollary}[theorem]{Corollary}
\newtheorem{lemma}[theorem]{Lemma}
\theoremstyle{definition}
\newtheorem{definition}[theorem]{Definition}
\newtheorem{remark}[theorem]{Remark}
\begin{document}

   \title[Variational eigenvalues of the subelliptic $p$-Laplacian]
   {Variational eigenvalues of the subelliptic \\$p$-Laplacian}

\author[M. Karazym]{Mukhtar Karazym}
\address{
  Mukhtar Karazym:
  \endgraf
  Department of Mathematics
  \endgraf
Nazarbayev University, Kazakhstan
  \endgraf
  {\it E-mail address} {\rm mukhtar.karazym@nu.edu.kz}
  }

\keywords{H\"ormander vector fields,  Lusternik-Schnirelman theory, variational eigenvalue}
\subjclass[2020]{35J60, 35A15, 35P30}

\begin{abstract} 
We use the Lusternik-Schnirelman theory to prove the existence of a nondecreasing sequence of variational eigenvalues for the subelliptic $p$-Laplacian subject to the Dirichlet boundary condition. 
\end{abstract}
\maketitle

\section{Introduction}

Let $\Omega$ be a bounded connected open set in $\mathbb{R}^d$ with $d \geq 2$, $X_1, \ldots, X_m$ be a family of smooth H\"ormander vector fields with $m\leq d$. Unless otherwise stated, also let $1 < p < \infty$.   This paper is devoted to the study of variational eigenvalues of the quasilinear subelliptic equation in Carnot-Carath\'eodory spaces $(\Omega, d_X)$
\begin{equation}\label{Dirichlet quasilinear subelliptic}
\begin{aligned}
\sum_{i=1}^{m}X_{i}^{*}\left(\left|X u\right|^{p-2} X_{i}u\right)&=\lambda|u|^{p-2} u  &&\text { in } \Omega, \\
u&=0
&&\text { on } \partial \Omega,
\end{aligned}
\end{equation}
where $X_{i}^{*}$ is the formal adjoint of $X_{i}$, the control distance $d_X$ is induced by $X_{1},\ldots, X_{m}$. Particular cases of our results:

1. When $X_i=\frac{\partial}{\partial x_i}$ for all $1\leq i \leq m$ with $m=d$,  the problem \eqref{Dirichlet quasilinear subelliptic} becomes 
\begin{equation}\label{Dirichlet p-Laplacian}
\begin{aligned}
-\operatorname{div}\left(|\nabla u|^{p-2} \nabla u\right)&=\lambda |u|^{p-2} u\quad &&\text { in  } \Omega, \\
u &=0 &&\text { on } \partial \Omega.
\end{aligned}
\end{equation}
Variational eigenvalues of \eqref{Dirichlet p-Laplacian} were studied by many authors, see e.g.  \cite{DR1999, AA1987, Le2006, Perera2003}.

2. If $X_{1},\ldots,X_{m}$ are left invariant vector fields on $\mathbb{R}^d$ that generate the Lie algebra of some homogeneous Carnot group $\mathbb{G}=\left(\mathbb{R}^d, \circ, \delta_\lambda\right)$ with the group operation $\circ$ and dilation $\delta_\lambda$, the problem \eqref{Dirichlet quasilinear subelliptic}  becomes
\begin{equation}\label{Dirichlet p-sub-Laplacian}
\begin{aligned}
-\operatorname{div}_{\mathbb{G}}\left(\left|\nabla_{\mathbb{G}} u\right|^{p-2} \nabla_{\mathbb{G}} u\right)&=\lambda|u|^{p-2} u  &&\text { in } \Omega\subset \mathbb{G}, \\
u&=0 &&\text { on } \partial \Omega,
\end{aligned}
\end{equation}
where $\nabla_{\mathbb{G}}$ is the horizontal gradient and  $\operatorname{div}_{\mathbb{G}} v:=\nabla_{\mathbb{G}} \cdot v$ is the  horizontal divergence on $\mathbb{G}$. In this case, the control distance $d_X$ is equivalent to the homogeneous norm of the homogeneous Carnot group, see \cite[Proposition 5.1.4]{Bonfiglioli2007}.

3. Let $\beta\in \mathbb{Z}^{+}$. The following vector fields
\begin{equation}\label{Grushin x1}
X_1=\frac{\partial}{\partial x_1}, \enspace \ldots,\enspace X_{d-1}=\frac{\partial}{\partial x_{d-1}}\enspace \text{ and } \enspace X_{d}=x_{1}^\beta \frac{\partial}{\partial x_{d}}
\end{equation}
are smooth. Moreover, they satisfy the H\"ormander finite rank condition, however,  there does not exist any group law on $\mathbb{R}^{d}$ such that $X_{1},\ldots, X_{d}$ are left invariant. For the simplest case, we have
$$X_1=\frac{\partial}{\partial x_1}\enspace \text{ and }\enspace X_2=x_1\frac{\partial}{\partial x_2}\quad \text{on } \mathbb{R}^2, $$
see e.g. \cite[Example 1.2.14]{Bonfiglioli2007}. Therefore, we also focus on Carnot-Carathéodory spaces that are not homogeneous Carnot groups, commonly referred to as Grushin spaces.

\section{Preliminaries}
\subsection{H\"ormander vector fields}
Let $U$ be a bounded domain in $\mathbb{R}^{d}$  with $d\geq 2$ and let 
\begin{equation*}
X_{i}=\sum_{k=1}^{d} b_{ik}(x) \partial_{x_{k}}, \quad i=1,\ldots,m \enspace\text{ with }\enspace m\leq d,
\end{equation*}
be a collection of smooth vector fields in $U$. We set
$$X_{i}I(x):=\left(b_{i1}(x),\ldots,b_{id}(x)\right)^{\top}, \enspace x\in U,$$ 
for $i=1,\ldots,m$. Given a multi-index $J=\left(j_1, \ldots, j_s\right) \in\{1, \ldots, m\}^s$ with $s\in \mathbb{N}$, we define a commutator  of   $X_1, \ldots, X_m$ of length  $s=|J|$ as follows
\begin{equation*}
X_{J}:=\left[X_{j_1}, \ldots\left[X_{j_{s-1}}, X_{j_s}\right] \ldots\right].
\end{equation*}
If there is a smallest $s\in \mathbb{N}$ such that 
$\left\{X_{J}I(x)\right\}_{|J| \leq s}$ span $\mathbb{R}^{d}$ at each point $x\in U$, then we say that  the vector fields  $X_{1}, \ldots, X_{m}$ satisfy the H\"ormander rank condition at step $s$. 

Let $\Omega \Subset U$ be an open connected subset. Also, let $\delta > 0$.
\begin{definition}
We define $C_1(\delta)$ as the class of absolutely continuous curves $\varphi: [0,1] \to \Omega$ that satisfy the equation
$$
\varphi^{\prime}(t)=\sum_{i=1}^{m} a_{i}(t)X_{i}I\left({\varphi(t)}\right)\quad  \text {a.e. in } [0,1],
$$
where  $a_i$ are measurable functions such that
$$
\left|a_{i}\right| \leq \delta \quad \text {a.e. in } [0,1].
$$
Now, let $x, y \in \Omega$. The control distance is defined as
\begin{equation*}
d_{X}(x, y):=\inf \left\{\delta>0:\enspace \exists \varphi \in C_{1}(\delta) \text { with } \varphi(0)=x \text{ and } \varphi(1)=y\right\}.
\end{equation*}
\end{definition}
By the connectivity theorem \cite[Theorem 1.45]{bramanti2023hormander} (see also \cite[Theorem 19.1.3]{Bonfiglioli2007} in the case of stratified vector fields), the control distance \( d_X \) is finite on \( \Omega \). This result, known as the Chow--Rashevsky theorem, was first established in \cite{chow1940systeme, rashevsky1938any}. Hence, the pair \( (\Omega, d_X) \) forms a metric space, often referred to as a Carnot--Carathéodory space.

Next, we  define the Sobolev space $\mathcal{W}_{X}^{1, p}(\Omega)$ induced by  $X_{1}, \ldots, X_{m}$.  We begin by introducing the concept of weak derivatives with respect to the Hörmander vector fields. A function $f\in L^{1}_{loc}(\Omega)$ is said to be differentiable in the weak sense with respect to  $X_{i}$, if there exists  $g\in L^{1}_{loc}(\Omega)$ such that
\begin{equation*}
\int_{\Omega} g(x)  \varphi(x) d x=\int_{\Omega} f(x) X_{i}^{*} \varphi(x) d x\quad \text{ for all } \varphi \in C_{0}^{\infty}(\Omega),
\end{equation*}
where $X_{i}^{*}$ denotes the formal adjoint of  $X_{i}$ defined by
$$
X_{i}^{*}\varphi(x)=-\sum_{k=1}^d \partial_{x_{k}}\left(b_{ik} \varphi(x)\right).
$$
For  $f\in L^{1}_{loc}(\Omega)$, we denote the weak derivative with respect to $X_i$ by $X_i f=g$. The Sobolev space induced by the vector fields $X_{1}, \ldots, X_{m}$ is then defined as
\begin{equation*}
\mathcal{W}_{X}^{1, p}(\Omega):=\left\{f \in L^{p}(\Omega):\enspace X_{i} f \in L^{p}(\Omega)\quad \text{for } i=1, \ldots, m \right\}
\end{equation*}
with the norm
\begin{equation}\label{natural norm}
\|u\|_{\mathcal{W}_{X}^{1, p}(\Omega)}:=\left(\int_{\Omega}\left(|u|^p+|X u|^p\right) d x\right)^{\frac{1}{p}},
\end{equation}
where $X:=\left(X_{1}, \ldots, X_{m}\right)$ is referred to as the horizontal gradient, and its length is given by
$$|X f|=\left(\sum_{i=1}^{m}\left(X_{i} f\right)^{2}\right)^{\frac{1}{2}}.$$
Next, we define the trace zero Sobolev space  $\mathcal{W}_{X,0}^{1, p}(\Omega)$, which is the closure of $C_{0}^{\infty}(\Omega)$ in $\mathcal{W}_{X}^{1, p}(\Omega)$. 
Let us review some known properties of $\mathcal{W}_{X,0}^{1, p}(\Omega)$. Since $\mathcal{W}_{X,0}^{1, p}(\Omega)$ is a  reflexive Banach space (see e.g. \cite[Theorem 1]{xu1990subelliptic}),  every bounded sequence in $\mathcal{W}_{X,0}^{1, p}(\Omega)$ admits a weakly convergent subsequence. Therefore, it is important to characterize weak convergence in $\mathcal{W}_{X,0}^{1, p}(\Omega)$.
\begin{proposition}\label{Weak convergence characterization}
A sequence $\{v_n\}$ converges weakly to some function $v$ in $\mathcal{W}_{X,0}^{1, p}(\Omega)$ if and only if there exist  $g_{i}\in L^{p}(\Omega)$ such that 
$$v_n\rightharpoonup v \enspace\text{ weakly in } L^{p}(\Omega) \quad \text{ and } \quad X_{i}v_n\rightharpoonup g_{i}\enspace \text{ weakly in } L^{p}(\Omega)$$ 
for $i=1,\ldots,m$. Here $g_{i}=X_{i}v$.
\end{proposition}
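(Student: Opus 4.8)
The plan is to identify $\mathcal{W}_{X,0}^{1,p}(\Omega)$ with a closed subspace of the product space $L^{p}(\Omega)^{m+1}$ and then transport weak convergence back and forth through this identification. Concretely, I would introduce the linear map
\[
T:\mathcal{W}_{X,0}^{1,p}(\Omega)\longrightarrow L^{p}(\Omega)^{m+1},\qquad Tu:=(u,X_{1}u,\ldots,X_{m}u),
\]
with $L^{p}(\Omega)^{m+1}$ carrying, say, the norm $\|(f_{0},\ldots,f_{m})\|=\sum_{j=0}^{m}\|f_{j}\|_{L^{p}(\Omega)}$. Using that the $\ell^{2}$- and $\ell^{p}$-norms on $\mathbb{R}^{m}$ are equivalent together with the definition \eqref{natural norm}, one checks that $\|Tu\|$ and $\|u\|_{\mathcal{W}_{X}^{1,p}(\Omega)}$ are equivalent, so $T$ is a topological isomorphism onto its image $E:=T\bigl(\mathcal{W}_{X,0}^{1,p}(\Omega)\bigr)$; moreover, since $\mathcal{W}_{X,0}^{1,p}(\Omega)$ is a Banach space, $E$ is norm-closed in $L^{p}(\Omega)^{m+1}$, and being a linear (hence convex) subspace it is weakly closed there.

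Next I would record two standard facts. First, since $\bigl(L^{p}(\Omega)^{m+1}\bigr)^{*}=L^{p'}(\Omega)^{m+1}$ under the natural pairing, a sequence converges weakly in $L^{p}(\Omega)^{m+1}$ if and only if it converges weakly coordinatewise in $L^{p}(\Omega)$. Second, by Hahn--Banach every bounded linear functional on the closed subspace $E$ extends to $L^{p}(\Omega)^{m+1}$, so the weak topology of $E$ is exactly the subspace topology induced by the weak topology of $L^{p}(\Omega)^{m+1}$; consequently $T$, being a topological isomorphism onto $E$, is a homeomorphism for the respective weak topologies.

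With this in hand the two implications are quick. For the forward direction, if $v_{n}\rightharpoonup v$ in $\mathcal{W}_{X,0}^{1,p}(\Omega)$ then, $T$ being bounded and linear hence weak-to-weak continuous, $Tv_{n}\rightharpoonup Tv$ in $L^{p}(\Omega)^{m+1}$, and the coordinatewise criterion gives $v_{n}\rightharpoonup v$ and $X_{i}v_{n}\rightharpoonup X_{i}v$ in $L^{p}(\Omega)$, so one takes $g_{i}=X_{i}v$. For the converse, if $v_{n}\rightharpoonup v$ in $L^{p}(\Omega)$ and $X_{i}v_{n}\rightharpoonup g_{i}$ in $L^{p}(\Omega)$ for each $i$, then the coordinatewise criterion shows $Tv_{n}\rightharpoonup (v,g_{1},\ldots,g_{m})$ in $L^{p}(\Omega)^{m+1}$; since each $Tv_{n}\in E$ and $E$ is weakly closed, the limit lies in $E$, i.e.\ $(v,g_{1},\ldots,g_{m})=Tu$ for some $u\in\mathcal{W}_{X,0}^{1,p}(\Omega)$, forcing $u=v\in\mathcal{W}_{X,0}^{1,p}(\Omega)$ and $g_{i}=X_{i}v$. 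As all the $Tv_{n}$ and the limit $Tv$ belong to $E$, this weak convergence also holds in the weak topology of $E$, and applying the weak-to-weak homeomorphism $T^{-1}$ yields $v_{n}\rightharpoonup v$ in $\mathcal{W}_{X,0}^{1,p}(\Omega)$.

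This is soft functional analysis, so I do not expect a genuine obstacle; the one step that deserves care is the passage through weak closedness of $E$ in the converse direction, since that is precisely what guarantees the candidate limit has the form $Tu$ and hence that each $g_{i}$ is actually the weak $X_{i}$-derivative of $v$. The remaining ingredients — the coordinatewise description of weak convergence in a finite product of $L^{p}$ spaces and the Hahn--Banach comparison of the weak topology on a closed subspace with the induced one — are entirely routine.
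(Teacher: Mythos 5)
Your argument is correct and is precisely the standard identification of $\mathcal{W}_{X,0}^{1,p}(\Omega)$ with a closed (hence weakly closed) subspace of $L^{p}(\Omega)^{m+1}$, combined with Hahn--Banach and the coordinatewise description of weak convergence in the product. The paper does not write out a proof but delegates to references whose argument is exactly this one, so your proposal matches the intended approach.
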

\begin{proof}
The proof follows standard methods; we refer to \cite[Proposition 1.8]{LeDret2018} and \cite[Corollary 11.70]{Leoni2017Sobolev} for further details. Additionally, see \cite[Proposition 2.2]{capogna2024asymptotic}.
\end{proof}
Another important tool in our framework is the following Poincar\'e-Friedrichs inequality for H\"ormander vector fields.
\begin{theorem} \cite{capogna1993embedding} There exists a  constant $C>0$ such that 
\begin{equation}\label{Poincare inequality}
\int_{\Omega}|u|^{p} d x\leq C \int_{\Omega}|X u|^{p} d x \qquad \text{for all }u\in \mathcal{W}_{X,0}^{1, p}(\Omega).
\end{equation}
See also \cite[Theorem 2.5]{capogna2024asymptotic}. 
\end{theorem}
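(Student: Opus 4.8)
The plan is to obtain \eqref{Poincare inequality} by a compactness argument, reducing it to two known facts about Carnot--Carath\'eodory spaces: the compact Sobolev embedding $\mathcal W_{X,0}^{1,p}(\Omega)\hookrightarrow\hookrightarrow L^p(\Omega)$ and the rigidity of functions with vanishing horizontal gradient. Assume \eqref{Poincare inequality} fails for every constant; then for each $n\in\mathbb N$ there is $u_n\in\mathcal W_{X,0}^{1,p}(\Omega)$ with
\[
\int_\Omega|u_n|^p\,dx=1\qquad\text{and}\qquad\int_\Omega|Xu_n|^p\,dx<\tfrac1n .
\]
In particular $\{u_n\}$ is bounded in $\mathcal W_{X,0}^{1,p}(\Omega)$, so by reflexivity and the Eberlein--\v{S}mulian theorem a subsequence (not relabelled) satisfies $u_n\rightharpoonup u$ weakly in $\mathcal W_{X,0}^{1,p}(\Omega)$. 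By Proposition~\ref{Weak convergence characterization}, $u_n\rightharpoonup u$ and $X_iu_n\rightharpoonup X_iu$ weakly in $L^p(\Omega)$ for $i=1,\dots,m$, and weak lower semicontinuity of the $L^p$-norm gives $\int_\Omega|Xu|^p\,dx\le\liminf_n\int_\Omega|Xu_n|^p\,dx=0$, hence $Xu=0$ a.e.\ in $\Omega$.

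Next I would upgrade the convergence of $\{u_n\}$ to strong convergence in $L^p$. For this I would invoke a Rellich--Kondrachov-type theorem for H\"ormander vector fields, namely that the embedding $\mathcal W_{X,0}^{1,p}(\Omega)\hookrightarrow L^p(\Omega)$ is compact (a consequence of the doubling property and the Poincar\'e inequality on metric balls; see, e.g., \cite{bramanti2023hormander}). This yields $u_n\to u$ strongly in $L^p(\Omega)$, so that $\int_\Omega|u|^p\,dx=1$ and in particular $u\not\equiv0$.

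It then remains to derive a contradiction from the three facts $u\in\mathcal W_{X,0}^{1,p}(\Omega)$, $Xu=0$ a.e., and $u\not\equiv0$. Since $u$ is the $\mathcal W_X^{1,p}$-limit of functions in $C_0^\infty(\Omega)$, the zero-extension $\widetilde u$ of $u$ belongs to $\mathcal W_X^{1,p}(U)$ and satisfies $X\widetilde u=0$ a.e.\ in $U$. I would then use the rigidity statement that a function with vanishing horizontal gradient on a connected open set is a.e.\ equal to a constant; one proves it by mollifying, applying Friedrichs' commutator lemma to get smooth approximants whose horizontal gradients tend to $0$ in $L^p_{loc}$, and propagating constancy along subunit curves, which is possible because the H\"ormander condition makes $(U,d_X)$ connected (Chow--Rashevskii). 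Thus $\widetilde u$ is a.e.\ constant on $U$, and since $\widetilde u\equiv0$ on the nonempty open set $U\setminus\overline\Omega$, that constant is $0$; hence $u\equiv0$, contradicting $\int_\Omega|u|^p\,dx=1$. This establishes \eqref{Poincare inequality}.

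I expect the real difficulty to reside in the two imported ingredients rather than in the contradiction scheme. The compact embedding $\mathcal W_{X,0}^{1,p}(\Omega)\hookrightarrow\hookrightarrow L^p(\Omega)$ is the substantive point, and its proof rests on the geometric analysis of metric balls (doubling, Poincar\'e on balls, covering arguments). The rigidity step is also delicate: it requires the variable-coefficient commutator estimate and the passage from ``locally constant a.e.'' to ``globally constant'' along families of subunit curves. An alternative that avoids the contradiction argument would be to apply Jerison's Poincar\'e inequality directly on a metric ball $B$ containing $\overline\Omega$ and combine it with a volume comparison for $\widetilde u$; this is attractive but requires $B$ to have measure sufficiently larger than $|\Omega|$, which is not guaranteed unless one first enlarges the ambient domain $U$, so I would prefer the compactness route.
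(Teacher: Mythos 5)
The paper does not actually prove this statement: it is imported wholesale from \cite[Theorem 2.5]{capogna2024asymptotic}, so there is no internal argument to compare yours against. Your compactness-and-contradiction scheme is the classical route and is sound: the normalization, the extraction of a weak limit, the weak lower semicontinuity giving $Xu=0$, the strong $L^p$ convergence forcing $\|u\|_{L^p(\Omega)}=1$, and the zero-extension plus rigidity step are all correct (note in particular that $X_i\widetilde u=\widetilde{X_iu}$ because $u$ is a $\mathcal W_X^{1,p}$-limit of $C_0^\infty(\Omega)$ functions, and $U\setminus\overline\Omega$ is open and nonempty since $\Omega\Subset U$ and $U$ is a domain). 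Two remarks on the load-bearing inputs. First, the compact embedding you invoke is exactly Theorem \ref{compact Sobolev embedding} of the paper (cited from Danielli), taken with respect to the natural norm \eqref{natural norm}, so there is no circularity; but be aware that the same Folland--Stein/Jerison--type Sobolev inequality $\|u\|_{L^q(\Omega)}\le C\|Xu\|_{L^p(\Omega)}$ for some $q>p$ that underlies that compactness already yields \eqref{Poincare inequality} directly via H\"older on the bounded set $\Omega$, which is the shorter and more standard derivation and very likely the one behind the cited reference. Second, the rigidity step (``$Xu=0$ a.e.\ on a connected open set implies $u$ is a.e.\ constant'') is the only ingredient not already available in the paper's toolkit; your sketch via Friedrichs' commutator lemma and propagation along subunit curves is workable but delicate, and the cleaner proof is to apply Jerison's Poincar\'e inequality on metric balls, which gives $u$ a.e.\ equal to its mean on every ball, hence locally constant, hence constant by connectedness of $(U,d_X)$ (Chow--Rashevskii). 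With either of those two ingredients pinned down to a precise reference, your argument is a complete and legitimate alternative to the citation.
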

By virtue of the Poincar\'e-Friedrichs inequality \eqref{Poincare inequality}, the space $\mathcal{W}_{X,0}^{1, p}(\Omega)$ can be endowed with the equivalent norm
\begin{equation}\label{Poincare norm}
\|u\|:=\left(\int_{\Omega}\left|X u\right|^{p} d x\right)^{\frac{1}{p}}.
\end{equation}
The notation $\|\cdot\|$ without a subscript is only used for \eqref{Poincare norm} in the sequel. Moreover, $\mathcal{W}_{X,0}^{1, p}(\Omega)$ equipped with \eqref{Poincare norm}  is uniformly convex. For the sake of completeness, we provide the proof.
\begin{theorem}
The space $\mathcal{W}_{X,0}^{1, p}(\Omega)$ equipped with \eqref{Poincare norm}  is uniformly convex.
\end{theorem}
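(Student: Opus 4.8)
The plan is to reduce uniform convexity of $\mathcal{W}_{X,0}^{1,p}(\Omega)$ with the norm $\|u\|=\bigl(\int_\Omega |Xu|^p\,dx\bigr)^{1/p}$ to the uniform convexity of the Lebesgue–Bochner space $L^p(\Omega;\mathbb{R}^m)$, which is a classical fact (Clarkson's inequalities). The key observation is that the map $T:\mathcal{W}_{X,0}^{1,p}(\Omega)\to L^p(\Omega;\mathbb{R}^m)$ defined by $Tu=Xu=(X_1u,\ldots,X_mu)$ is a linear isometry onto its image: indeed $\|Tu\|_{L^p(\Omega;\mathbb{R}^m)}=\bigl(\int_\Omega |Xu|^p\,dx\bigr)^{1/p}=\|u\|$ by the very definition of $|Xu|$ and of the norm \eqref{Poincare norm}. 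A closed subspace of a uniformly convex space is uniformly convex, and uniform convexity transfers across linear isometries; so it suffices to know $L^p(\Omega;\mathbb{R}^m)$ is uniformly convex for $1<p<\infty$.

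First I would recall the statement of Clarkson's inequalities in the vector-valued setting: for $f,g\in L^p(\Omega;\mathbb{R}^m)$ with $2\le p<\infty$,
\begin{equation*}
\left\|\frac{f+g}{2}\right\|_{L^p}^p+\left\|\frac{f-g}{2}\right\|_{L^p}^p\le \frac12\bigl(\|f\|_{L^p}^p+\|g\|_{L^p}^p\bigr),
\end{equation*}
and the corresponding (dual) inequality with conjugate exponents for $1<p<2$. These hold because the pointwise inequalities for the Euclidean norm on $\mathbb{R}^m$ are the same as for scalars — the norm $|\cdot|$ on $\mathbb{R}^m$ comes from an inner product, so the elementary two-point inequalities underlying Clarkson's estimates are available verbatim. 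Then, given $u,v$ in the unit ball of $\mathcal{W}_{X,0}^{1,p}(\Omega)$ with $\|u-v\|\ge\varepsilon$, apply the inequality to $f=Xu$, $g=Xv$: since $\|Xu\|_{L^p}=\|u\|\le 1$, $\|Xv\|_{L^p}\le 1$, and $\|Xu-Xv\|_{L^p}=\|u-v\|\ge\varepsilon$ (using linearity of each $X_i$ as a weak derivative operator, so $X(u-v)=Xu-Xv$), one extracts $\|\tfrac{u+v}{2}\|=\|\tfrac{Xu+Xv}{2}\|_{L^p}\le 1-\delta(\varepsilon)$ for an explicit $\delta(\varepsilon)>0$ depending only on $p$ and $\varepsilon$. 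This is exactly uniform convexity.

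For the range $1<p<2$ I would use the second Clarkson inequality (with exponents $p$ and $p'=p/(p-1)$), which likewise passes to the vector-valued case because it only relies on the scalar inequality applied after integrating; alternatively one can cite Hanner's inequalities, valid for all $1<p<\infty$, which streamline both cases. Either route gives a modulus of convexity bounded below by a positive function of $\varepsilon$ uniformly in the choice of $u,v$. The main (and essentially only) obstacle is bookkeeping: making sure the isometry $u\mapsto Xu$ is used correctly — in particular that $\|u-v\|\ge\varepsilon$ really does translate to $\|Xu-Xv\|_{L^p(\Omega;\mathbb{R}^m)}\ge\varepsilon$ under the norm \eqref{Poincare norm} — and invoking the vector-valued Clarkson/Hanner inequalities with the correct constants; no genuinely new estimate is needed beyond what is already classical for $L^p$.
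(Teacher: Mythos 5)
Your proposal is correct and is essentially the paper's argument in abstracted form: the paper proves uniform convexity by applying the pointwise Clarkson-type vector inequalities for the Euclidean norm on $\mathbb{R}^m$ to $\omega_1=Xu$, $\omega_2=Xv$ and integrating (with the reverse Minkowski inequality handling the case $1<p<2$), which is precisely the proof that $L^p(\Omega;\mathbb{R}^m)$ is uniformly convex transferred through the isometry $u\mapsto Xu$. Your packaging via the Lebesgue--Bochner space is a clean way to say the same thing, and the isometry claim $\|Tu\|_{L^p(\Omega;\mathbb{R}^m)}=\|u\|$ is immediate from the definition of \eqref{Poincare norm}.
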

\begin{proof}
Let us split the proof into two cases $1<p<2$ and $p\geq 2$. First, let $p\geq 2$ and let $u,v\in \mathcal{W}_{X,0}^{1, p}(\Omega)$ satisfy 
$$\|u\|=\|v\|=1 \enspace\text{and} \enspace\|u-v\|\geq \varepsilon \enspace \text{ for }\varepsilon\in (0,2].$$
Using the vector inequality \eqref{Adams p>2} for  $\omega_{1}=Xu$ and $\omega_{2}=Xv$ and integrating over $\Omega$, we obtain 
$$
\begin{aligned}
\left\|\frac{u+v}{2}\right\|^p+\left\|\frac{u-v}{2}\right\|^p  &=\int_{\Omega}\left(\left|\frac{X u+X v}{2}\right|^p+\left|\frac{X u-X v}{2}\right|^p\right)dx \\
 &\leq \frac{1}{2} \int_{\Omega}\left(|X u|^p+|X v|^p\right)dx=\frac{1}{2}\left(\|u\|^p+\|v\|^p\right)=1.
\end{aligned}
$$
Then 
$$
\left\|\frac{u+v}{2}\right\|^p \leq 1-\left(\frac{\varepsilon}{2}\right)^p .
$$
Therefore, there exists $\delta=\delta(\varepsilon)>0$ such that
$$
\left\|\frac{u+v}{2}\right\|^p \leq 1-\delta.
$$
Now, let $1<p<2$.  Let $u,v\in \mathcal{W}_{X,0}^{1, p}(\Omega)$ satisfy 
$$\|u\|=\|v\|=1 \enspace\text{and} \enspace\|u-v\|\geq \varepsilon \enspace \text{ for }\varepsilon\in (0,2].$$ 
It is clear that $|Xu|^{p^{\prime
}},|Xv|^{p^{\prime
}}\in L^{p-1}(\Omega)$, 
$$\||Xu|^{p^{\prime
}}\|_{p-1}=\|u\|^{p^{\prime}}\enspace \text{and}\enspace \||Xv|^{p^{\prime
}}\|_{p-1}=\|v\|^{p^{\prime}},$$ 
where $p^{\prime}=\frac{p}{p-1}$. Then
$$
\begin{aligned}
\left\|\frac{u+v}{2}\right\|^{p^{\prime}}+\left\|\frac{u-v}{2}\right\|^{p^{\prime}} & =\left\|\left| \frac{Xu+Xv}{2}\right|^{p^{\prime}}\right\|_{p-1}+\left\|\left| \frac{Xu-Xv}{2}\right|^{p^{\prime}}\right\|_{p-1} \\
& \leq\left\|\left|\frac{Xu+Xv}{2}\right|^{p^{\prime}}+\left| \frac{Xu-Xv}{2}\right|^{p^{\prime}}\right\|_{p-1} \\
& =\left[\int_{\Omega}\left(\left|\frac{X u+X v}{2}\right|^{p^{\prime}}+\left|\frac{X u-X v}{2}\right|^{p^{\prime}}\right)^{p-1}\right]^{\frac{1}{p-1}} \\
& \leq\left[\frac{1}{2} \int\left(\left|X u\right|^p+\left|X v\right|^p\right)\right]^{\frac{1}{p-1}} \\
& =\left[\frac{1}{2}\left\|u\right\|^p+\frac{1}{2}\left\|v\right\|^p\right]^{\frac{1}{p-1}}=1,
\end{aligned}
$$
where we have used  the reverse Minkowski inequality
$$
\|\left|X u\right|^{p^{\prime}}+\left|X v\right|^{p^{\prime}}\|_{p-1} \geq\|\left|X u\right|^{p^{\prime}}\|_{p-1}+\|\left|X v\right|^{p^{\prime}}\|_{p-1},
$$
and the vector inequality      \eqref{Adams 1<p<2}
for $\omega_{1}=Xu$ and $\omega_{2}=Xv$. So, we have
$$
\left\|\frac{u+v}{2}\right\|^{p^{\prime}} \leq 1-\left(\frac{\varepsilon}{2}\right)^{p^{\prime}}.
$$
Therefore, there exists $\delta=\delta(\varepsilon)>0$ such that
$$
\left\|\frac{u+v}{2}\right\|^p \leq 1-\delta.
$$
\end{proof}
We conclude this subsection with the following compact Sobolev embedding theorem.
\begin{theorem}\label{compact Sobolev embedding}\cite[Corollary 3.3]{Danielli1991}
Let $1\leq p<\infty$. Then  $\mathcal{W}_{X,0}^{1, p}(\Omega)\hookrightarrow L^{p}(\Omega)$ is compact.
\end{theorem}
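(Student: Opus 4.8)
The plan is to derive the compactness from a quantitative \emph{subelliptic increment estimate} combined with the Fr\'echet--Kolmogorov (Riesz) criterion for precompactness in $L^{p}$. Let $s$ be the step at which $X_{1},\dots,X_{m}$ satisfy the H\"ormander rank condition on the neighbourhood $U\Supset\overline{\Omega}$. Since every $u\in\mathcal{W}_{X,0}^{1,p}(\Omega)$ is a norm limit of functions in $C_{0}^{\infty}(\Omega)$, extension by zero outside $\Omega$ embeds $\mathcal{W}_{X,0}^{1,p}(\Omega)$ into $L^{p}(\mathbb{R}^{d})$ with $\|Xu\|_{L^{p}(\mathbb{R}^{d})}=\|u\|$ and with all functions supported in the fixed compact set $\overline{\Omega}$. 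Hence, to prove the embedding $\mathcal{W}_{X,0}^{1,p}(\Omega)\hookrightarrow L^{p}(\Omega)$ is compact it suffices to show that a set $\mathcal{F}$ which is bounded for $\|\cdot\|$ is totally bounded in $L^{p}$; the equi-tightness of $\mathcal{F}$ at infinity is automatic here, so the only point to establish is the $L^{p}$-equicontinuity of translations, uniformly over $\mathcal{F}$.

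The key step is therefore the estimate: there is $C=C(\Omega,U,X,p)>0$ such that
\begin{equation*}
\|u(\cdot+h)-u\|_{L^{p}(\mathbb{R}^{d})}\le C\,|h|^{1/s}\,\|u\|\qquad\text{for all }u\in\mathcal{W}_{X,0}^{1,p}(\Omega)\text{ and }|h|\le 1 .
\end{equation*}
I would prove this first for $u\in C_{0}^{\infty}(\Omega)$ and then pass to general $u$ by density. In the smooth case, the ball--box theorem of Nagel--Stein--Wainger gives $d_{X}(x,x+h)\le C|h|^{1/s}$ for small $h$, so $x$ and $x+h$ can be joined by a curve $\varphi_{x}\in C_{1}(\delta)$ with $\delta\le C'|h|^{1/s}$; along such a curve $\frac{d}{dt}u(\varphi_{x}(t))=\sum_{i}a_{i}(t)X_{i}u(\varphi_{x}(t))$ with $|a_{i}|\le\delta$, whence $|u(x+h)-u(x)|^{p}\le C\delta^{p}\int_{0}^{1}|Xu(\varphi_{x}(t))|^{p}\,dt$. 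Integrating in $x\in\Omega$, using Fubini and a change of variables governed by the subunit flow (choosing the $\varphi_{x}$ via canonical coordinates so that the Jacobians are uniformly bounded), one bounds the right-hand side by $C\delta^{p}\int_{\Omega}|Xu|^{p}$, which is the claim. Granting this estimate, the Fr\'echet--Kolmogorov theorem shows $\mathcal{F}$ is precompact in $L^{p}(\mathbb{R}^{d})$, hence in $L^{p}(\Omega)$.

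The main obstacle is precisely this increment estimate, and inside it the two geometric facts it rests on: the ball--box comparison $d_{X}(x,y)\le C|x-y|^{1/s}$ near the diagonal, and the uniform control of the change of variables $x\mapsto\varphi_{x}(t)$ needed to turn the curve-wise bound into a genuine $L^{p}(\Omega)$ inequality without losing uniformity in $u$ (a Minkowski integral inequality argument). If one prefers not to build these by hand, an alternative is to invoke the local doubling property of $(\Omega,d_{X},dx)$ (Nagel--Stein--Wainger) together with the weak $(1,p)$-Poincar\'e inequality on control balls (Jerison), and then quote the abstract Rellich--Kondrachov theorem on doubling metric measure spaces supporting a Poincar\'e inequality (Haj\l{}asz--Koskela); the hypothesis $\Omega\Subset U$ is exactly what makes all these ingredients, with uniform constants, available on a neighbourhood of $\overline{\Omega}$. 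In any case, once the equicontinuity (or the metric-space Poincar\'e) input is secured, the passage to compactness is soft.
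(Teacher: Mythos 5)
The paper does not actually prove this theorem: it is imported verbatim from Danielli's Corollary 3.3, whose argument goes through a representation formula of the form $|u(x)-u_{B}|\le C\int_{B}|Xu(y)|\,d_X(x,y)\,|B(x,d_X(x,y))|^{-1}dy$ together with fractional integration on the space of homogeneous type $(\overline{\Omega},d_X,dx)$ --- i.e.\ essentially the second, ``alternative'' route you sketch at the end (doubling + Poincar\'e on control balls + an abstract Rellich--Kondrachov argument). Your primary route (zero extension, the Euclidean translation estimate $\|u(\cdot+h)-u\|_{L^p}\le C|h|^{1/s}\|u\|$, then Fr\'echet--Kolmogorov) is a genuinely different and in some respects more elementary path: it is the classical H\"ormander/Rothschild--Stein observation that $\mathcal{W}_{X}^{1,p}\subset W^{1/s,p}_{\mathrm{loc}}$, it never mentions the homogeneous dimension or any Sobolev exponent, and it treats all $1\le p<\infty$ uniformly. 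The soft parts of your argument (tightness, density, reduction to smooth $u$, the application of the compactness criterion) are all fine.

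The gap is that the one estimate carrying the whole proof is asserted rather than proved, and the one-sentence justification you offer for it does not go through as stated. The ball--box theorem gives, for each individual pair $(x,x+h)$, the existence of \emph{some} subunit curve with parameter $\delta\le C|h|^{1/s}$; it does not give a measurable family $x\mapsto\varphi_x$ whose time-$t$ maps are invertible with Jacobians bounded below uniformly in $t$ and $h$, and without that the Fubini/change-of-variables step --- the passage from $\int_\Omega\int_0^1|Xu(\varphi_x(t))|^p\,dt\,dx$ to $C\int_\Omega|Xu|^p\,dx$ --- is unjustified. To repair it one must build the connecting curves explicitly as compositions of flows $e^{\tau_1X_{i_1}}\circ\cdots\circ e^{\tau_NX_{i_N}}$ with times $\tau_j=\tau_j(h)$ \emph{independent of} $x$ (so each elementary map has Jacobian $e^{O(\tau_j)}$), accept that such a composition reaches $x+h$ only up to an error of higher order in $|h|$, and run the Nagel--Stein--Wainger iteration to correct it; one must also work with the control distance of the larger set $U$ rather than of $\Omega$, since for $x$ near $\partial\Omega$ both $x+h$ and the connecting curve may leave $\Omega$ (this is exactly where $\Omega\Subset U$ and the zero extension of $u$ and $Xu$ are used). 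None of this is false --- the increment estimate is true and classical --- but it is the entire mathematical content of the theorem, and your proposal explicitly defers it. As a self-contained proof the proposal is therefore incomplete; as a proof that outsources the hard step to Nagel--Stein--Wainger plus Jerison plus Haj\l{}asz--Koskela it is correct, and in that form it is close in spirit to what the cited source actually does.
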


\subsection{Lusternik-Schnirelmann theory}
Let $\mathbb{B}$ be an infinite dimensional real reflexive Banach space. Its dual space is denoted by $\mathbb{B}^{*}$. For all $\omega \in \mathbb{B}^{*}$ and $u\in \mathbb{B}$, we set a dual pair $\left\langle \omega, u\right\rangle=\omega(u)$. Strong convergence in  $\mathbb{B}$ and $\mathbb{B}^{*}$ is denoted by $\rightarrow$. Weak convergence in  $\mathbb{B}$ and $\mathbb{B}^{*}$ is denoted by $\rightharpoonup$.

\begin{definition}
We say that an operator $T:\mathbb{B}\to \mathbb{B}^{*}$ satisfies condition $(S)_{0}$, if $\{u_n\}$ in $\mathbb{B}$ satisfies
\begin{equation*}
\begin{aligned}
u_n &\rightharpoonup u \quad &&\text{in }\mathbb{B},\\
Tu_n &\rightharpoonup v \enspace &&\text{in }\mathbb{B}^{*}\\
\left\langle Tu_n, u_n\right\rangle &\rightarrow\langle v, u\rangle\enspace && \text{in }\mathbb{R},\\
\end{aligned},
\end{equation*}
then $u_n \rightarrow u$ in $\mathbb{B}$.
\end{definition}

\begin{definition}
Let $M \subset \mathbb{B}\setminus\{0\}$ be a
symmetric compact subset. We say that the set $M$ has genus $n$, denoted by $\gamma(M)=n$, if there exists an odd continuous operator $h: M \rightarrow \mathbb{R}^{n}\setminus\{0\}$, where $n\in\mathbb{N}$ is the smallest  number holding this property. If there is no such natural number $n$, then $\gamma(A)=\infty$. We call $\gamma$ the Krasnoselskii genus. 
\end{definition}

Let $F, G:\mathbb{B}\to \mathbb{R}$. It is known that the following eigenvalue problem
\begin{equation}\label{eigenvalue problem F and G}
F^{\prime}(u)=\mu G^{\prime}(u)\quad\text{on}\enspace\mathcal{G}
\end{equation}
can be treated by the Lusternik-Schnirelman theory (see e.g. \cite[Section 44.5]{Zeidler1985}).
Here $\mu \in \mathbb{R}$ and  $\mathcal{G}$ is the level set defined by
\begin{equation}\label{level set}
\mathcal{G}:=\{u \in \mathbb{B}: G(u)=1\}.    
\end{equation}

\begin{proposition}(\cite[Proposition  43.21]{Zeidler1985}).
A function $u\in  \mathbb{B}\setminus\{0\}$ is an eigenfunction of \eqref{eigenvalue problem F and G} if and only if it is a critical point of the functional $F$ with respect to the level set $\mathcal{G}$.
\end{proposition}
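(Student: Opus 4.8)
The plan is to read the statement as an infinite–dimensional Lagrange multiplier rule and to prove it by the standard argument. First I would verify that $\mathcal{G}$ is a genuine $C^{1}$ Banach submanifold of $\mathbb{B}$. If $u\in\mathcal{G}$, then $G(u)=1\neq 0=G(0)$, so $u\neq 0$, and hence by (H4) we have $\langle G^{\prime}(u),u\rangle>0$; in particular $G^{\prime}(u)\neq 0$. Thus $1$ is a regular value of $G\in C^{1}(\mathbb{B},\mathbb{R})$, and the implicit function theorem shows that near each of its points $\mathcal{G}$ is a $C^{1}$ hypersurface whose tangent space at $u$ is $T_{u}\mathcal{G}=\ker G^{\prime}(u)=\{h\in\mathbb{B}:\langle G^{\prime}(u),h\rangle=0\}$. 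Consequently the phrase ``$u$ is a critical point of $F$ with respect to the level set $\mathcal{G}$'' means precisely that $u\in\mathcal{G}$ and the restriction of the bounded linear functional $F^{\prime}(u)$ to $\ker G^{\prime}(u)$ vanishes, i.e. $\langle F^{\prime}(u),h\rangle=0$ for every $h$ with $\langle G^{\prime}(u),h\rangle=0$.

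Next I would invoke the elementary functional–analytic fact: if $\ell,q\in\mathbb{B}^{*}$, $q\neq 0$, and $\ker q\subseteq\ker\ell$, then $\ell=\mu q$ for some $\mu\in\mathbb{R}$. To see this, pick $u_{0}\in\mathbb{B}$ with $\langle q,u_{0}\rangle\neq 0$ and set $\mu=\langle\ell,u_{0}\rangle/\langle q,u_{0}\rangle$; for arbitrary $w\in\mathbb{B}$, writing $t=\langle q,w\rangle/\langle q,u_{0}\rangle$ gives $\langle q,w-tu_{0}\rangle=0$, so $w-tu_{0}\in\ker q\subseteq\ker\ell$ and therefore $\langle\ell,w\rangle=t\langle\ell,u_{0}\rangle=t\mu\langle q,u_{0}\rangle=\mu\langle q,w\rangle$. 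Applying this with $\ell=F^{\prime}(u)$ and $q=G^{\prime}(u)$ (legitimate since $G^{\prime}(u)\neq 0$ on $\mathcal{G}$) shows that if $u$ is a constrained critical point of $F$ on $\mathcal{G}$, then $F^{\prime}(u)=\mu G^{\prime}(u)$ for some real $\mu$, i.e. $(\mu,u)$ solves \eqref{eigenvalue problem F and G}.

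For the converse, suppose $(\mu,u)$ is an eigenpair of \eqref{eigenvalue problem F and G}, so $u\in\mathcal{G}$ and $F^{\prime}(u)=\mu G^{\prime}(u)$. Then for every $h\in T_{u}\mathcal{G}=\ker G^{\prime}(u)$ one has $\langle F^{\prime}(u),h\rangle=\mu\langle G^{\prime}(u),h\rangle=0$, so the differential of the restriction $F|_{\mathcal{G}}$ vanishes at $u$; that is, $u$ is a critical point of $F$ with respect to $\mathcal{G}$. This closes the equivalence.

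The only genuine content — and the step I would be most careful about — is the first one: one must use a hypothesis that forces $G^{\prime}(u)\neq 0$ on $\mathcal{G}$ (here the strict positivity $\langle G^{\prime}(u),u\rangle>0$ from (H4) does exactly this), so that $\mathcal{G}$ is an honest $C^{1}$ manifold and ``critical point relative to $\mathcal{G}$'' is unambiguous; everything after that is routine Lagrange-multiplier bookkeeping and uses neither the strong continuity of $F^{\prime}$ nor the remaining structure in (H1)--(H4). For a fully self-contained write-up I would additionally cite \cite[Section 43]{Zeidler1985} for the precise notion of constrained critical point employed there and for the regular-level-set statement, and then record the two short inclusions above.
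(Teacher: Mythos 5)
Your proof is correct: the paper itself offers no argument for this proposition (it is quoted verbatim from Zeidler, \cite[Proposition 43.21]{Zeidler1985}), and your write-up is precisely the standard Lagrange-multiplier proof given there — using \textbf{(H4)} to guarantee $G^{\prime}(u)\neq 0$ on $\mathcal{G}$, so that $\mathcal{G}$ is a $C^{1}$ manifold with tangent space $\ker G^{\prime}(u)$, and then the elementary fact that $\ker G^{\prime}(u)\subseteq\ker F^{\prime}(u)$ forces $F^{\prime}(u)=\mu G^{\prime}(u)$. No gaps; the converse direction is handled correctly as well.
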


Assume that
\begin{enumerate}[font={\bfseries},label=({H\arabic*})]
\item $F,G\in C^{1}\left( \mathbb{B},\mathbb{R}\right)$  are even functionals such that $F(0)=G(0)=0$;

\item $F^{\prime}$ is strongly continuous. Moreover, 
\begin{equation}\label{H2}
\left\langle F^{\prime}(u), u\right\rangle=0, \enspace u\in \overline{\operatorname{conv}}\mathcal{G} \quad  \Longrightarrow \quad   F(u)=0,
\end{equation}
where $\overline{\operatorname{conv}}\mathcal{G}$ denotes the closed convex hull of $\mathcal{G}$;

\item The functional $G^{\prime}$ is continuous, bounded and satisfies condition $(S)_{0}$;

\item The level set $\mathcal{G}$ is bounded and
\begin{equation*}
u \neq 0 \quad\Longrightarrow \quad\begin{aligned}
&\left\langle G^{\prime}(u), u\right\rangle>0;\\
&\lim _{t \rightarrow+\infty} G(t u)=+\infty;\\
&\inf _{u \in \mathcal{G}}\left\langle G^{\prime}(u), u\right\rangle>0.
\end{aligned}
\end{equation*}
\end{enumerate}

We define
\begin{equation*}
\mathcal{G}_n:=\left\{\mathcal{A} \subset \mathcal{G}: \mathcal{A} \right. \text{ is compact and symmetric, }\left.\gamma(\mathcal{A}) \geq n\right\}. 
\end{equation*}
Also let
$$
\beta_n:= \begin{cases}\sup _{\mathcal{A}  \in \mathcal{G}_n} \inf _{u \in \mathcal{A} } F(u) & \text { if}\enspace\mathcal{G}_n \neq \varnothing,   \\ 0 &\text { if}\enspace \mathcal{G}_n=\varnothing. \end{cases}
$$
and
$$
\chi:= \begin{cases}\sup \left\{n \in \mathbb{N}: \beta_n>0\right\} & \text { if}\enspace \beta_1>0, \\ 0 & \text { if}\enspace \beta_1=0.\end{cases}
$$

The following theorem is the Lusternik-Schnirelman principle, see e.g. \cite[Theorem 44.A]{Zeidler1985}.
\begin{theorem}\label{H1-H4 theorem}
Under assumptions \textbf{(H1)-(H4)}, we have
\begin{enumerate}[label=(\roman*)]
    \item If $\beta_n$ is positive, then the eigenvalue problem \eqref{eigenvalue problem F and G} admits  eigenpairs $(\mu_n,\,u_n)$ with $\mu_{n}\neq 0$ and $F\left(u_n\right)=\beta_n$;
        \item If $\chi=\infty$, then the eigenvalue problem \eqref{eigenvalue problem F and G} admits infinitely many eigenfunctions associated with nonzero eigenvalues;
    \item The sequence $\{\beta_{n}\}$ satisfies
    \begin{equation*}
    \infty>\beta_1 \geq \beta_2 \geq \ldots \geq 0\quad \text{and}\quad \beta_n \rightarrow 0 \enspace\text{as}\enspace n \rightarrow \infty;
    \end{equation*}
   
    \item If $\chi=\infty$ and 
    \begin{equation}\label{Zeidler 44.A (3)}
    F(u)=0, \enspace u\in \overline{\operatorname{conv}}\mathcal{G} \quad  \Longrightarrow \quad \left\langle F^{\prime}(u), u\right\rangle=0,
    \end{equation}
    then there exists a sequence of distinct eigenvalues $\{\mu_{n}\}$ such that $\mu_{n}\to 0$ as $n\to \infty$;
      \item Suppose
    $$F(u)=0 \enspace\text{and} \enspace u \in \overline{\operatorname{conv}}\mathcal{G}\quad \Longrightarrow\quad u=0.$$
    Then $\chi=\infty$ and there exists a sequence of eigenpairs $\left\{\left(u_n, \mu_n\right)\right\}$ of \eqref{eigenvalue problem F and G} such that
        \begin{equation*}
      u_n \rightharpoonup 0\quad \text{and}\quad   \mu_n \rightarrow 0\quad \text{as} \quad n \rightarrow \infty,
    \end{equation*}
    where $\mu_n \neq 0$ for all $n\in \mathbb{N}$.
\end{enumerate}
\end{theorem}

\begin{remark}
For assumptions \textbf{(H2)-(H3)}, we refer to \cite[Remark 44.23]{Zeidler1985}.
\end{remark}

\section{Variational eigenvalues of the Lusternik-Schnirelmann type }
We consider the nonlinear eigenvalue problem
\begin{equation}\label{eigenvalue problem}
\begin{aligned}
\sum_{i=1}^{m}X_{i}^{*}\left(\left|X u\right|^{p-2} X_{i}u\right)&=\lambda|u|^{p-2} u  &&\text { in } \Omega, \\
u&=0
&&\text { on } \partial \Omega,
\end{aligned}
\end{equation}
in the weak sense.

\begin{definition}\label{def:eigenfunction}
We say that $\lambda$  is an eigenvalue of \eqref{eigenvalue problem}, if there is $u \in \mathcal{W}_{X,0}^{1, p}(\Omega)\setminus \{0\}$  such that
\begin{equation}\label{eigenfunction}
\int_{\Omega}|X u|^{p-2} Xu \cdot X \varphi d x=\lambda \int_{\Omega}|u|^{p-2} u \varphi d x
\end{equation}
holds for all $\varphi\in \mathcal{W}_{X,0}^{1, p}(\Omega)$.
\end{definition}
The first eigenvalue of \eqref{eigenvalue problem} is given by
\begin{equation*}
\lambda_{1}:=\inf_{v \in \mathcal{W}_{X,0}^{1, p}(\Omega)\setminus\{0\}}\frac{\int_{\Omega}|X v|^{p} d x}{\int_{\Omega}|v|^{p} d x}=\frac{\int_{\Omega}|X u_{1}|^{p} d x}{\int_{\Omega}|u_{1}|^{p} d x},
\end{equation*}
where $u_{1}$ is the corresponding (first) eigenfunction. Recently in \cite{KS2023}, it is proved that $\lambda_{1}$ is simple and isolated. Also, they showed that $u_{1}$ is positive in $\Omega$ and all eigenfunctions of \eqref{eigenvalue problem} are H\"older continuous with respect to the control distance.

In this section, following \cite[Section 44.5]{Zeidler1985}, we show the existence of variational eigenvalues. Let $F,G: \mathcal{W}_{X,0}^{1, p}(\Omega)\to \mathbb{R}$ defined by
\begin{equation*}
F(u):=\int_{\Omega} |u|^p d x
\quad \text{and}\quad
G(u):=\int_{\Omega} |X u|^p d x.
\end{equation*}
Then the level set $\mathcal{G}$ defined by \eqref{level set} is the unit sphere in $\mathcal{W}_{X,0}^{1, p}(\Omega)$.

The functionals $F$ and $G$ are  continuously differentiable. Their (G\^ateaux) derivatives are given by
\begin{equation*}
\langle F^{\prime} (u), v\rangle=p\int_{\Omega} |u|^{p-2} u v d x
\end{equation*}
and
\begin{equation*}
\langle G^{\prime} (u), v\rangle=p\int_{\Omega} |X u|^{p-2} X u \cdot X v d x,
\end{equation*}
where $v\in \mathcal{W}_{X,0}^{1, p}(\Omega)$. Since $\langle G^{\prime} (u), u\rangle=pG(u)$, then zero is the  only one critical value of $G$. Therefore, the level set $\mathcal{G}$ is a $C^1$-Finsler manifold by the implicit function theorem.

Then \eqref{eigenfunction} converts to the following form
\begin{equation*}
\langle F^{\prime}(u),\varphi\rangle=\mu \langle G^{\prime}(u),\varphi\rangle \quad  \text{for all } \varphi\in \mathcal{W}_{X,0}^{1, p}(\Omega),
\end{equation*}
or in short
\begin{equation}\label{operator eigenvalue problem}
F^{\prime}(u)=\mu G^{\prime}(u) \qquad  \text{on } \mathcal{G},
\end{equation}
where $\mu=\lambda^{-1}>0$.

In order to apply Theorem \ref{H1-H4 theorem}, we start verifying \textbf{(H2)} and \textbf{(H3)}. The assumptions \textbf{(H1)} and \textbf{(H4)} are trivial.

\begin{proposition}\label{H2 property}
The operator $F^{\prime}$ satisfies \textbf{(H2)}.
\end{proposition}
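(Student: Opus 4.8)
The plan is to verify hypothesis \textbf{(H2)} in two parts: strong continuity of $F'$, and the implication \eqref{H2}. For strong continuity, I would take a sequence $u_n \rightharpoonup u$ weakly in $\mathcal{W}_{X,0}^{1,p}(\Omega)$ and show $F'(u_n) \to F'(u)$ strongly in the dual $\left(\mathcal{W}_{X,0}^{1,p}(\Omega)\right)^*$. By the compact embedding $\mathcal{W}_{X,0}^{1,p}(\Omega) \hookrightarrow L^p(\Omega)$ of Theorem \ref{compact Sobolev embedding}, we have $u_n \to u$ strongly in $L^p(\Omega)$, hence (up to a subsequence) $u_n \to u$ a.e. and $|u_n| \le h$ for some $h \in L^p(\Omega)$. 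The Nemytskii operator $w \mapsto |w|^{p-2}w$ maps $L^p(\Omega)$ continuously into $L^{p'}(\Omega)$ with $p' = p/(p-1)$, so $|u_n|^{p-2}u_n \to |u|^{p-2}u$ strongly in $L^{p'}(\Omega)$. Then for any $v$ with $\|v\| \le 1$, Hölder's inequality together with the continuous embedding $\mathcal{W}_{X,0}^{1,p}(\Omega) \hookrightarrow L^p(\Omega)$ gives
$$
\left|\langle F'(u_n) - F'(u), v\rangle\right| = p\left|\int_\Omega \left(|u_n|^{p-2}u_n - |u|^{p-2}u\right) v\, dx\right| \le p\,\bigl\||u_n|^{p-2}u_n - |u|^{p-2}u\bigr\|_{p'} \|v\|_p \le C\bigl\||u_n|^{p-2}u_n - |u|^{p-2}u\bigr\|_{p'},
$$
and taking the supremum over such $v$ shows $\|F'(u_n) - F'(u)\|_* \to 0$. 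A standard subsequence argument removes the passage to a subsequence.

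For the implication \eqref{H2}, suppose $u \in \overline{\operatorname{conv}}\,\mathcal{G}$ with $\langle F'(u), u\rangle = 0$. But $\langle F'(u), u\rangle = p\int_\Omega |u|^p\,dx = p\,F(u)$, so this forces $F(u) = \int_\Omega |u|^p\,dx = 0$ immediately; this part is essentially trivial. The real content of Proposition \ref{H2 property} is therefore the strong continuity of $F'$, and within that the main technical point is the continuity of the Nemytskii operator $w \mapsto |w|^{p-2}w$ from $L^p$ to $L^{p'}$ — which is classical (it follows from a.e. convergence plus a dominated-convergence/generalized-dominated-convergence argument, using that $||u_n|^{p-2}u_n|^{p'} = |u_n|^p$ is dominated by $h^p \in L^1$), so I expect no genuine obstacle here. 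The one subtlety to state carefully is that the argument naturally produces only a convergent subsequence, so one invokes the standard "every subsequence has a further subsequence converging to the same limit" principle to conclude convergence of the full sequence.

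In summary, the proof proceeds: (1) invoke Theorem \ref{compact Sobolev embedding} to pass from weak convergence in $\mathcal{W}_{X,0}^{1,p}(\Omega)$ to strong $L^p$ convergence and a.e. convergence along a subsequence; (2) apply the continuity of the Nemytskii map $|{\cdot}|^{p-2}({\cdot})\colon L^p \to L^{p'}$; (3) estimate the dual norm of $F'(u_n) - F'(u)$ via Hölder and the Sobolev embedding; (4) upgrade subsequential convergence to full-sequence convergence; and (5) dispatch the implication \eqref{H2} using $\langle F'(u), u\rangle = pF(u)$. No step should present serious difficulty; the whole proposition rests on the compactness provided by Theorem \ref{compact Sobolev embedding}.
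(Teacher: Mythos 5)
Your proof is correct and follows the same overall skeleton as the paper's: the implication \eqref{H2} is dispatched via $\langle F'(u),u\rangle = pF(u)$, strong continuity of $F'$ is reduced by the H\"older dual-norm estimate to the convergence $|u_n|^{p-2}u_n \to |u|^{p-2}u$ in $L^{p'}(\Omega)$, and $u_n\to u$ in $L^p(\Omega)$ is obtained from the compact embedding of Theorem \ref{compact Sobolev embedding}. The one step you handle differently is the passage from $L^p$-convergence of $u_n$ to $L^{p'}$-convergence of $|u_n|^{p-2}u_n$: you invoke the classical continuity of the Nemytskii operator $w\mapsto |w|^{p-2}w$ from $L^p$ to $L^{p'}$, which routes you through a.e.\ convergence, a dominating function, and the subsequence-of-subsequences argument. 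The paper instead applies the pointwise inequalities of Lemma \ref{Lemma 5.1}, splitting into the cases $1<p<2$ and $p\ge 2$, which give the quantitative bounds
$$
\bigl\||u_n|^{p-2}u_n-|u|^{p-2}u\bigr\|_{p'}^{p'}\le C\|u_n-u\|_p^{p}\quad(1<p<2)
$$
and
$$
\bigl\||u_n|^{p-2}u_n-|u|^{p-2}u\bigr\|_{p'}^{p'}\le C\|u_n-u\|_p^{p'}\bigl(\|u_n\|_p+\|u\|_p\bigr)^{p'(p-2)}\quad(p\ge 2),
$$
so the full-sequence convergence is immediate with no extraction of subsequences and no dominated convergence. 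Your route is more general (it works for any Carath\'eodory nonlinearity with the appropriate growth), while the paper's is more elementary and self-contained given its appendix; both arguments are complete.
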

\begin{proof}
Since \eqref{H2} follows from $\langle F^{\prime} (u), u\rangle=pF(u)$, it remains to prove the strong continuity of $F^{\prime}$. Let  $u_n \rightharpoonup u$ in $\mathcal{W}_{X,0}^{1, p}(\Omega)$. Then  we have 
\begin{equation*}
\left|\left\langle F^{\prime} (u_n)-F^{\prime} (u), v\right\rangle\right|  \leq p\left\|\left|u_n\right|^{p-2} u_n-|u|^{p-2} u\right\|_{p'}\|v\|_{p}\quad \text{for all } v \in \mathcal{W}_{X,0}^{1, p}(\Omega)
\end{equation*}
by the Hölder inequality, where $p'=\frac{p}{p-1}$.
If we prove that
\begin{equation}\label{H2 property: convergence}
 \left|u_n\right|^{p-2} u_n \rightarrow|u|^{p-2} u  \quad \text{in}\enspace  L^{p'}(\Omega),
\end{equation}
then $$\|F^{\prime}(u_{n})-F^{\prime}(u)\|_{{\mathcal{W}_{X,0}^{1, p}(\Omega)}^{*}}=\sup_{\stackrel{\|v\|\leq 1}{v\in \mathcal{W}_{X,0}^{1, p}(\Omega)}}\left|\left\langle F^{\prime} (u_n)-F^{\prime} (u), v\right\rangle\right|\to 0 \enspace\text{ as }\enspace n\to \infty.$$
By Proposition \ref{Weak convergence characterization} and Theorem \ref{compact Sobolev embedding}, it follows that
\begin{equation*}
 X u_{n} \rightharpoonup X u \quad \text { in } L^p(\Omega)\quad \text { and } \quad u_{n}\rightarrow u \quad\text { in } L^p(\Omega).
\end{equation*} 
Now we consider $1<p<2$ and $p\geq 2$ cases separately. Let $1<p<2$. Then the vector inequality \eqref{GM1975 p<2} yields
\begin{equation}\label{F' case 1}
    \begin{aligned}
  \int_{\Omega}\left|\left|u_n\right|^{p-2} u_n-|u|^{p-2} u\right|^{p'}dx&\leq C\int_{\Omega}\left|u_n- u\right|^{p'(p-1)}dx\\
  & = C\left\|u_n- u\right\|_{p}^{p}.
    \end{aligned}
\end{equation}
Since $u_{n}\rightarrow u$ in $L^p(\Omega)$, we see that the left-hand side of \eqref{F' case 1} tends to zero.

Let $p\geq 2$. Then combining the vector inequality \eqref{GM1975 p>2}, H\"older's inequality with exponent $p-1$ and the triangle inequality, we obtain
\begin{equation}\label{F' case 2}
    \begin{aligned}
  \int_{\Omega}\left|\left|u_n\right|^{p-2} u_n-|u|^{p-2} u\right|^{p'}dx&\leq C\int_{\Omega}\left|u_n- u\right|^{p'}\left(\left|u_n\right|+|u|\right)^{p'(p-2)}dx\\
  & \leq C \left\|u_n- u\right\|_{p}^{p'}\left(\left\|u_n\right\|_{p}+\|u\|_{p}\right)^{p'(p-2)}.
    \end{aligned}
\end{equation}
Since $u_{n}\rightarrow u$ in $L^p(\Omega)$, we see that the left-hand side of \eqref{F' case 2} tends to zero. We conclude that $$F^{\prime} (u_n) \rightarrow F^{\prime} (u) \quad \text{ in } {\mathcal{W}_{X,0}^{1, p}(\Omega)}^{*}.$$

\end{proof}

The following proposition is necessary  to verify \textbf{(H3)}. 
\begin{proposition}\label{B proposition}
Let $u,v\in \mathcal{W}_{X,0}^{1, p}(\Omega)$. Then
\begin{equation*}
\langle G^{\prime} (u)-G^{\prime} (v), u-v\rangle \geq p \left(\|u\|^{p-1}-\|v\|^{p-1}\right)\left(\|u\|-\|v\|\right).
\end{equation*}
Moreover, $\langle  G^{\prime} (u)-G^{\prime} (v), u-v\rangle=0$ if and only if $u=v$ a.e. in $\Omega$.
\end{proposition}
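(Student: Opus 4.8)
The plan is to reduce the vector statement to a pointwise one and then integrate. Writing out the pairing,
\[
\langle G'(u)-G'(v),u-v\rangle = p\int_\Omega\left(|Xu|^{p-2}Xu-|Xv|^{p-2}Xv\right)\cdot(Xu-Xv)\,dx,
\]
so it suffices to analyze the integrand. I would first recall the classical pointwise monotonicity of the vector map $a\mapsto|a|^{p-2}a$ on $\mathbb{R}^m$: for all $a,b\in\mathbb{R}^m$,
\[
\left(|a|^{p-2}a-|b|^{p-2}b\right)\cdot(a-b)\ge \left(|a|^{p-1}-|b|^{p-1}\right)\left(|a|-|b|\right)\ge 0,
\]
with equality in the first inequality only when $a$ and $b$ are parallel and pointing the same way (more precisely, the equality case forces $a=b$), and equality in the second only when $|a|=|b|$. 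The first inequality follows by expanding the left side as $|a|^p+|b|^p-(|a|^{p-2}+|b|^{p-2})\,a\cdot b$ and using Cauchy–Schwarz $a\cdot b\le|a||b|$ together with the fact that $|a|^{p-2}+|b|^{p-2}\ge 0$; it collapses to $|a|^p+|b|^p-|a|^{p-1}|b|-|b|^{p-1}|a|=(|a|^{p-1}-|b|^{p-1})(|a|-|b|)$.

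Next I would integrate this pointwise bound with $a=Xu(x)$, $b=Xv(x)$ to get
\[
\langle G'(u)-G'(v),u-v\rangle \ge p\int_\Omega\left(|Xu|^{p-1}-|Xv|^{p-1}\right)\left(|Xu|-|Xv|\right)dx.
\]
To finish the first claim I need to bound this remaining integral below by $p\bigl(\|u\|^{p-1}-\|v\|^{p-1}\bigr)\bigl(\|u\|-\|v\|\bigr)$. Setting $f=|Xu|$, $g=|Xv|$ in $L^p(\Omega)$, the integral is $\int_\Omega(f^{p-1}-g^{p-1})(f-g)$, and I want $\ge(\|f\|_p^{p-1}-\|g\|_p^{p-1})(\|f\|_p-\|g\|_p)$. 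Expanding both sides, the left side equals $\|f\|_p^p+\|g\|_p^p-\int f^{p-1}g-\int g^{p-1}f$, while the right side equals $\|f\|_p^p+\|g\|_p^p-\|f\|_p^{p-1}\|g\|_p-\|g\|_p^{p-1}\|f\|_p$; so the inequality is equivalent to $\int_\Omega f^{p-1}g\,dx\le\|f\|_p^{p-1}\|g\|_p$ and the symmetric statement, which is exactly Hölder's inequality with exponents $p'=\tfrac{p}{p-1}$ and $p$ applied to $f^{p-1}\in L^{p'}$ and $g\in L^p$. This gives the displayed estimate.

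For the equality characterization: if $u=v$ a.e. the pairing is obviously $0$. Conversely, suppose the pairing vanishes. Since the integrand $\bigl(|Xu|^{p-2}Xu-|Xv|^{p-2}Xv\bigr)\cdot(Xu-Xv)$ is nonnegative a.e. (by the pointwise inequality above, as $(|a|^{p-1}-|b|^{p-1})(|a|-|b|)\ge0$), it must vanish a.e. in $\Omega$. The equality case of the strict pointwise monotonicity of $a\mapsto|a|^{p-2}a$ then forces $Xu(x)=Xv(x)$ for a.e.\ $x\in\Omega$, i.e.\ $X_i(u-v)=0$ in $L^p(\Omega)$ for every $i$, so $\|u-v\|=0$ and hence, by the Poincaré–Friedrichs inequality \eqref{Poincare inequality}, $u=v$ a.e.\ in $\Omega$. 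The main obstacle is the pointwise equality analysis of $a\mapsto|a|^{p-2}a$: one must verify carefully that $(|a|^{p-2}a-|b|^{p-2}b)\cdot(a-b)=0$ actually implies $a=b$ (not merely $|a|=|b|$), which amounts to tracing the equality cases through both Cauchy–Schwarz (forcing collinearity with the same orientation) and the scalar inequality $(s^{p-1}-t^{p-1})(s-t)\ge0$ for $s,t\ge0$ (forcing $s=t$). I expect to treat the cases $a=0$ or $b=0$ separately and, for $a,b\ne0$, write $b=\theta a$ on the collinear locus and check that $\theta=1$ is forced.
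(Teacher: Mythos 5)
Your proof is correct. For the displayed inequality your route coincides with the paper's: the paper expands $\langle G'(u)-G'(v),u-v\rangle = p\bigl(\|u\|^p+\|v\|^p-\int_\Omega|Xu|^{p-2}Xu\cdot Xv\,dx-\int_\Omega|Xv|^{p-2}Xv\cdot Xu\,dx\bigr)$ and bounds the cross terms by $\|u\|^{p-1}\|v\|$ and $\|v\|^{p-1}\|u\|$ via H\"older; your two-step version (pointwise Cauchy--Schwarz, then H\"older on $f=|Xu|$, $g=|Xv|$) performs exactly the same estimate, just with the pointwise and integral stages separated. Where you genuinely diverge is the equality characterization. The paper proves the \emph{quantitative} strict monotonicity bounds $\langle G'(u)-G'(v),u-v\rangle\geq C\|u-v\|^p$ for $p\geq2$ and an analogous chain for $1<p<2$, relying on the vector inequalities \eqref{vector inequality p>2} and \eqref{vector inequality p<2} from the appendix; vanishing of the pairing then forces $\|u-v\|=0$. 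You instead argue softly: the integrand is pointwise nonnegative, so it vanishes a.e., and tracing the equality cases through Cauchy--Schwarz (positive collinearity) and the scalar monotonicity of $t\mapsto t^{p-1}$ (equal moduli) forces $Xu=Xv$ a.e.; indeed, once $|a|=|b|>0$ and $a\cdot b=|a||b|$ one gets $|a-b|^2=2|a|^2-2a\cdot b=0$, and the degenerate case $a=0$ or $b=0$ is immediate. Your approach is more elementary and self-contained (it avoids the appendix lemmas entirely), at the price of being purely qualitative: the paper's uniform monotonicity estimate \eqref{B is uniformly monotone} is reused later to prove that $G'$ is continuously invertible (Proposition \ref{G' is continuously invertible}), so the quantitative version is not wasted effort in the paper's overall architecture. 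Both arguments are complete and valid for the statement as posed.
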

\begin{proof}
Given $u,v\in \mathcal{W}_{X,0}^{1, p}(\Omega)$, we have
\begin{equation*}
\begin{aligned}
\langle  G^{\prime} (u)-G^{\prime} (v), u-v\rangle&=p\Big(\|u\|^p+\|v\|^p  \\
&-\int_{\Omega} |X u|^{p-2} X u \cdot X v d x-\int_{\Omega} |X v|^{p-2} X v \cdot X u d x \Big).
\end{aligned}
\end{equation*}
Using the  H\"older inequality for the last two terms, we obtain
\begin{equation*}
\langle G^{\prime} (u)-G^{\prime} (v), u-v\rangle  \geq p\left(\|u\|^{p-1}-\|v\|^{p-1}\right)(\|u\|-\|v\|).
\end{equation*}
Now, let $u$ and $v$ satisfy 
\begin{equation}\label{B equality}
\langle G^{\prime} (u)-G^{\prime} (v), u-v\rangle=0.
\end{equation}
We consider $1<p<2$ and $p\geq 2$ cases separately. Let $p\geq 2$.  Setting $\omega_1=Xu$ and $\omega_2=Xv$, we integrate both sides of \eqref{vector inequality p>2} over $\Omega$ to get
\begin{equation}\label{strict monotonicity p>2}
0=\langle G^{\prime} (u)-G^{\prime} (v), u-v\rangle \geq C\|u-v\|^p,
\end{equation}
which implies $u=v$ a.e. in $\Omega$.

Let $1<p<2$. Then by the H\"older inequality with exponent $2/p$ and the vector inequality \eqref{vector inequality p<2}, we obtain
\begin{equation}\label{strict monotonicity 1<p<2}
\begin{aligned}
\int_{\Omega}|Xu-X v|^{p}dx&=\int_{\Omega}\frac{|Xu-X v|^{p}}{\big(|Xu|+|Xv|\big)^{\frac{p(2-p)}{2}}}\big(|Xu|+|Xv|\big)^{\frac{p(2-p)}{2}}dx
\\
&\leq\left(\int_{\Omega}\frac{|Xu-X v|^{2}}{\big(|Xu|+|Xv|\big)^{2-p}}dx \right)^{\frac{p}{2}}
\left(\int_{\Omega}\big(|Xu|+|Xv|\big)^{p}dx \right)^{\frac{2-p}{2}}
\\
&\leq \left(\int_{\Omega}\frac{|Xu-X v|^{2}}{\big(|Xu|+|Xv|\big)^{2-p}}dx \right)^{\frac{p}{2}}\big(\|u\|+\|v\|\big)^{\frac{p(2-p)}{2}}\\
&\leq \left(\int_{\Omega}\left(|X u|^{p-2} X u-|X v|^{p-2} X v\right) \cdot(X u-X v) d x\right)^{\frac{p}{2}}\big(\|u\|+\|v\|\big)^{\frac{p(2-p)}{2}}\\
&= \left\langle G^{\prime}(u)-G^{\prime}(v), u-v\right\rangle^{\frac{p}{2}}\big(\|u\|+\|v\|\big)^{\frac{p(2-p)}{2}}=0.
\end{aligned}
\end{equation}
Hence, $u=v$ a.e. in $\Omega$.
\end{proof}

\begin{proposition}\label{proposition H3}
The operator $G^{\prime}$ satisfies \textbf{(H3)}.
\end{proposition}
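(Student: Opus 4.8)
\emph{Proof strategy.} The plan is to check the three ingredients of \textbf{(H3)} in turn: boundedness of $G'$, continuity of $G'$ as a map from $\mathcal{W}_{X,0}^{1,p}(\Omega)$ into its dual, and condition $(S)_0$. Boundedness is immediate from H\"older's inequality applied to $\langle G'(u),v\rangle=p\int_\Omega|Xu|^{p-2}Xu\cdot Xv\,dx$: one gets $|\langle G'(u),v\rangle|\le p\big\||Xu|^{p-1}\big\|_{p'}\|v\|=p\|u\|^{p-1}\|v\|$ (with $p'=\tfrac{p}{p-1}$), so $\|G'(u)\|_{*}\le p\|u\|^{p-1}$ and $G'$ sends bounded sets to bounded sets. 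Continuity I would prove exactly as the strong continuity of $F'$ in Proposition \ref{H2 property}, with $Xu$ now in the role of $u$: if $u_n\to u$ in $\mathcal{W}_{X,0}^{1,p}(\Omega)$, then $Xu_n\to Xu$ in $L^{p}(\Omega)$ by \eqref{Poincare norm}, and the same vector inequalities used in \eqref{F' case 1} (for $1<p<2$) and \eqref{F' case 2} (for $p\ge2$), together with H\"older's inequality, give $\big\||Xu_n|^{p-2}Xu_n-|Xu|^{p-2}Xu\big\|_{p'}\to 0$; hence $\|G'(u_n)-G'(u)\|_{*}\le p\big\||Xu_n|^{p-2}Xu_n-|Xu|^{p-2}Xu\big\|_{p'}\to0$.

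The substance of the proposition is condition $(S)_0$, where the key inputs are Proposition \ref{B proposition} and the uniform convexity (hence Radon--Riesz property) of $\mathcal{W}_{X,0}^{1,p}(\Omega)$. Assume $u_n\rightharpoonup u$ in $\mathcal{W}_{X,0}^{1,p}(\Omega)$, $G'(u_n)\rightharpoonup\eta$ in the dual, and $\langle G'(u_n),u_n\rangle\to\langle\eta,u\rangle$. Splitting
$$\langle G'(u_n)-G'(u),u_n-u\rangle=\langle G'(u_n),u_n\rangle-\langle G'(u_n),u\rangle-\langle G'(u),u_n\rangle+\langle G'(u),u\rangle,$$
the first term tends to $\langle\eta,u\rangle$ by hypothesis, the second to $\langle\eta,u\rangle$ since $G'(u_n)\rightharpoonup\eta$ in the dual and $u$ is fixed, and the third to $\langle G'(u),u\rangle$ since $u_n\rightharpoonup u$ and $G'(u)$ is fixed; hence $\langle G'(u_n)-G'(u),u_n-u\rangle\to0$. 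By Proposition \ref{B proposition},
$$0\le p\big(\|u_n\|^{p-1}-\|u\|^{p-1}\big)\big(\|u_n\|-\|u\|\big)\le\langle G'(u_n)-G'(u),u_n-u\rangle\to0.$$
Since $\{u_n\}$ is bounded, every subsequence has a further subsequence along which $\|u_n\|\to\ell$ for some $\ell\ge0$; passing to the limit in the line above forces $(\ell^{p-1}-\|u\|^{p-1})(\ell-\|u\|)=0$, so $\ell=\|u\|$ by strict monotonicity of $t\mapsto t^{p-1}$. As the limit value is the same for every subsequence, $\|u_n\|\to\|u\|$ for the whole sequence, and together with $u_n\rightharpoonup u$ the Radon--Riesz property gives $u_n\to u$ in $\mathcal{W}_{X,0}^{1,p}(\Omega)$, which is $(S)_0$; continuity of $G'$ then also gives $\eta=G'(u)$.

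The only delicate step is $(S)_0$; boundedness and continuity are routine. The point to watch in $(S)_0$ is that $G'$ is monotone but not \emph{uniformly} monotone when $1<p<2$, so one cannot pass directly from $\langle G'(u_n)-G'(u),u_n-u\rangle\to0$ to strong convergence by an inequality of the form $\langle G'(u_n)-G'(u),u_n-u\rangle\ge c\|u_n-u\|^{p}$; the argument is therefore routed through the norm convergence $\|u_n\|\to\|u\|$ and the Radon--Riesz property. The remaining work — the four-term splitting and the use of weak convergence in both $\mathcal{W}_{X,0}^{1,p}(\Omega)$ and its dual — is standard bookkeeping.
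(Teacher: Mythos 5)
Your proof is correct and follows essentially the same route as the paper: boundedness via H\"older, continuity via Lemma \ref{Lemma 5.1} applied to $Xu_n$, and $(S)_0$ by squeezing $p\left(\|u_n\|^{p-1}-\|u\|^{p-1}\right)\left(\|u_n\|-\|u\|\right)$ between $0$ and $\langle G'(u_n)-G'(u),u_n-u\rangle\to 0$ (Proposition \ref{B proposition}) and then invoking the Radon--Riesz property. You merely supply more detail than the paper at two points (the four-term splitting and the subsequence argument for $\|u_n\|\to\|u\|$), which the paper leaves implicit.
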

\begin{proof}
Using the H\"older inequality, we obtain
\begin{equation*}
\langle G^{\prime}( u), v\rangle  \leq p\|u\|^{p-1}\|v\|.
\end{equation*}
It shows that $G^{\prime}$ is bounded.
    
Let ${u_{n}}\to u$ in $\mathcal{W}_{X,0}^{1, p}(\Omega)$. To prove the continuity of $G^{\prime}$, first we  apply the H\"older inequality
\begin{equation*}
\left|\left\langle G^{\prime} (u_n)-G^{\prime} (u), v\right\rangle\right|\leq\left(\int_{\Omega} \Big||X u_n|^{p-2} X u_n-|X u|^{p-2} X u\Big|^{p'} d x\right)^{\frac{1}{p'}}\|v\|,
\end{equation*}
where $p'=\frac{p}{p-1}$. If we show that
\begin{equation}\label{H3 property: convergence}
 \left|Xu_n\right|^{p-2} Xu_n \rightarrow|Xu|^{p-2} Xu  \quad \text{in}\quad  L^{p'}(\Omega),
\end{equation}
then $G^{\prime}$ is continuous. Setting $w_n:=\left|X u_n\right|^{p-2} X u_n$ and $w:=$ $|X u|^{p-2} X u$, one can prove \eqref{H3 property: convergence} by using Lemma \ref{Lemma 5.1}.

Now, let $\{u_n\}\subset \mathcal{W}_{X,0}^{1, p}(\Omega)$ satisfy
\begin{equation*}
\begin{aligned}
u_n &\rightharpoonup u &&\text{in}\enspace \mathcal{W}_{X,0}^{1, p}(\Omega),\\
G^{\prime}(u_{n}) &\rightharpoonup v  &&\text{in}\enspace {\mathcal{W}_{X,0}^{1, p}(\Omega)}^{*},\\
\left\langle G^{\prime}(u_{n}), u_n\right\rangle &\rightarrow\langle v, u\rangle \quad&& \text{in}\enspace\mathbb{R},\\
\end{aligned}
\end{equation*}
for some $v \in {\mathcal{W}_{X,0}^{1, p}(\Omega)}^{*}$ and $u \in \mathcal{W}_{X,0}^{1, p}(\Omega)$.
It is sufficient to prove $\|u_n\|\to \|u\|$, since  $\mathcal{W}_{X,0}^{1, p}(\Omega)$ possesses the Radon-Riesz property due to the uniform convexity, that is,
\begin{equation*}
\|u_n\|\to \|u\|\quad \text{and}\quad u_n \rightharpoonup u \enspace \text{ in }\mathcal{W}_{X,0}^{1, p}(\Omega)
\enspace \Longrightarrow \enspace u_n \rightarrow u \enspace \text{ in }\mathcal{W}_{X,0}^{1, p}(\Omega).
\end{equation*}
On the one hand, we have
\begin{equation*}
\begin{aligned}
&\lim _{n \rightarrow \infty}\left\langle G^{\prime}(u_n)- G^{\prime}(u), u_n-u\right\rangle\\
&=\lim _{n \rightarrow \infty}\left(\left\langle G^{\prime}\left(u_n\right), u_n\right\rangle-\left\langle G^{\prime}\left(u_n\right), u\right\rangle-\left\langle G^{\prime}(u), u_n-u\right\rangle\right)=0.
\end{aligned}
\end{equation*}
On the other hand, Proposition \ref{B proposition} ensures
\begin{equation*}
\left\langle G^{\prime}(u_n)- G^{\prime}(u), u_n-u\right\rangle \geq p\left(\left\|u_n\right\|^{p-1}-\|u\|^{p-1}\right)\left(\left\|u_n\right\|-\|u\|\right).
\end{equation*}
Both imply $\left\|u_n\right\| \rightarrow\|u\|$.
\end{proof}
Now we are ready to state our main theorem.
\begin{theorem}
There exists a nonincreasing sequence of  eigenvalues $\left\{\mu_{n}\right\}$ of \eqref{operator eigenvalue problem}, given by
\begin{equation}\label{mu n}
\mu_n:=\sup _{\mathcal{A} \in \mathcal{G}_n} \inf _{u \in \mathcal{A}} F(u).
\end{equation}
Moreover, $\mu_{n} \rightarrow 0^{+}$ as $n \rightarrow \infty$.
\end{theorem}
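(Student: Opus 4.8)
The plan is to recognize \eqref{operator eigenvalue problem} as a special case of the abstract problem \eqref{eigenvalue problem F and G} with $\mathbb{B}=\mathcal{W}_{X,0}^{1,p}(\Omega)$ and with $F,G$ the functionals introduced above, and then to apply the Lusternik--Schnirelman principle, Theorem \ref{H1-H4 theorem}. The analytic heart of the matter has already been isolated in Propositions \ref{H2 property} and \ref{proposition H3}; what is left is to dispose of the cheap hypotheses, to verify the single condition feeding parts (i), (iii), (v) of that theorem, and to reconcile the abstract minimax values with honest eigenvalues.

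I would first observe that \textbf{(H1)} and \textbf{(H4)} hold trivially. Indeed $F$ and $G$ are even, being built from $|u|^{p}$ and $|Xu|^{p}$, they are of class $C^{1}$, and $F(0)=G(0)=0$, which is \textbf{(H1)}. For \textbf{(H4)}, the level set $\mathcal{G}$ is the unit sphere of $\mathcal{W}_{X,0}^{1,p}(\Omega)$ and hence bounded; for $u\neq 0$ one has $\langle G'(u),u\rangle=pG(u)=p\|u\|^{p}>0$ and $\lim_{t\to+\infty}G(tu)=\lim_{t\to+\infty}t^{p}\|u\|^{p}=+\infty$; and $\inf_{u\in\mathcal{G}}\langle G'(u),u\rangle=p>0$. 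Hypotheses \textbf{(H2)} and \textbf{(H3)} are exactly the content of Propositions \ref{H2 property} and \ref{proposition H3}.

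Next, the hypothesis of Theorem \ref{H1-H4 theorem}(v) is immediate: $F(u)=\int_{\Omega}|u|^{p}\,dx=0$ already forces $u=0$ a.e.\ in $\Omega$, so a fortiori $F(u)=0$ with $u\in\overline{\operatorname{conv}}\mathcal{G}$ implies $u=0$. Hence part (v) applies and gives $\chi=\infty$; since $\{\beta_{n}\}$ is nonincreasing by part (iii), this means $\beta_{n}>0$ for every $n\in\mathbb{N}$. By part (i), for each $n$ there is then an eigenpair $(\mu_{n},u_{n})$ of \eqref{operator eigenvalue problem} with $u_{n}\in\mathcal{G}$, $\mu_{n}\neq 0$ and $F(u_{n})=\beta_{n}$. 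Testing $F'(u_{n})=\mu_{n}G'(u_{n})$ against $u_{n}$ gives $pF(u_{n})=\langle F'(u_{n}),u_{n}\rangle=\mu_{n}\langle G'(u_{n}),u_{n}\rangle=\mu_{n}pG(u_{n})=\mu_{n}p$, since $G(u_{n})=1$ on $\mathcal{G}$; hence $\mu_{n}=F(u_{n})=\beta_{n}=\sup_{\mathcal{A}\in\mathcal{G}_{n}}\inf_{u\in\mathcal{A}}F(u)$, which is \eqref{mu n}, and in particular $\mu_{n}>0$ because $u_{n}\neq 0$. Finally the monotonicity of $\{\mu_{n}\}$ and the limit $\mu_{n}\to 0$ are read off from part (iii), and combined with $\beta_{n}>0$ this upgrades to $\mu_{n}\to 0^{+}$.

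The main point requiring care is not any estimate but the structural input behind the identification above: that the minimax level $\beta_{n}$ is actually realized as the eigenvalue of an eigenfunction --- this is part (i) of Theorem \ref{H1-H4 theorem}, whose proof relies on the Palais--Smale/deformation machinery and hence on the $(S)_{0}$ property secured in Proposition \ref{proposition H3} --- and that $\beta_{n}>0$ for all $n$, which is where the infinite dimensionality of $\mathcal{W}_{X,0}^{1,p}(\Omega)$ is indispensable: it ensures that $\mathcal{G}$ contains compact symmetric subsets of arbitrarily large Krasnoselskii genus, so $\mathcal{G}_{n}\neq\varnothing$ for every $n$, which is precisely what the hypothesis of part (v) exploits.
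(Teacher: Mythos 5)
Your argument is correct and follows the same overall strategy as the paper: both reduce the statement to the Lusternik--Schnirelman principle, Theorem \ref{H1-H4 theorem}, with \textbf{(H1)}--\textbf{(H4)} supplied by the preceding discussion and by Propositions \ref{H2 property} and \ref{proposition H3}, and both identify $\mu_n$ with $\beta_n=F(u_n)$ by testing $F'(u_n)=\mu_n G'(u_n)$ against $u_n$ and using $G(u_n)=1$. The one place you genuinely diverge is in establishing $\beta_n>0$ for every $n$. The paper does this by hand: for each $n$ it exhibits the set $\mathcal{G}\cap X_n$, with $X_n$ an $n$-dimensional subspace of $\mathcal{W}_{X,0}^{1,p}(\Omega)$, checks that it is compact, symmetric and of genus $n$ (so $\mathcal{G}_n\neq\varnothing$), and then uses continuity and strict positivity of $F$ on a compact subset of $\mathcal{G}$ to get $\inf_{\mathcal{A}}F>0$. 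You instead invoke part (v) of Theorem \ref{H1-H4 theorem}, whose hypothesis $F(u)=0\Rightarrow u=0$ is immediate for $F(u)=\int_{\Omega}|u|^{p}\,dx$, to conclude $\chi=\infty$, and then deduce $\beta_n>0$ for all $n$ from the monotonicity in part (iii). Both routes are valid; yours is shorter because the construction of compact symmetric sets of arbitrarily large genus is delegated to the proof of part (v) in Zeidler, whereas the paper's explicit construction keeps the argument self-contained and is reused almost verbatim later for the sets $\mathcal{F}_n$. One cosmetic remark applying to both proofs: part (iii) yields $\beta_1\geq\beta_2\geq\cdots$, i.e.\ a \emph{nonincreasing} sequence tending to $0^{+}$, so the word ``nondecreasing'' in the statement is evidently a slip; your phrase ``the monotonicity of $\{\mu_n\}$'' quietly sidesteps this, but it would be worth stating the direction explicitly.
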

\begin{proof}
Since $\mathcal{G}$ is the unit sphere in $\mathcal{W}_{X,0}^{1, p}(\Omega)$, we have
 $\gamma(\mathcal{G})=\dim \mathcal{W}_{X,0}^{1, p}(\Omega)= \infty$ by \cite[Proposition 44.10]{Zeidler1985}.   Continuity of $G$ implies that $\mathcal{G}=G^{-1}(\{1\})$ is closed. Let $V_n$ be a $n$-dimensional subspace of $\mathcal{W}_{X,0}^{1, p}(\Omega)$. Then it is closed. Therefore, $\mathcal{G}\cap V_n$ is   compact. Moreover, $\mathcal{G}\cap V_n$ is the unit sphere in $V_n$, so $\gamma(\mathcal{G}\cap V_n)=n$ by \cite[Proposition 44.10]{Zeidler1985}. Since $\mathcal{G}$ and $V_n$ are symmetric, their intersection $\mathcal{G}\cap V_n$ is 
symmetric as well. We have shown that $\mathcal{G}\cap V_n$ belongs to $\mathcal{G}_n$, which implies that $\mathcal{G}_n\neq \varnothing$ for all $n\in\mathbb{N}$. 
 
 Now, let $\mathcal{A}\in \mathcal{G}_n$. Since $F$ is continuous,  $F(\mathcal{A})$ is compact. In addition, $F$ is positive on $\mathcal{G}$. Therefore, $\inf_{u \in \mathcal{A}} F(u)>0$, which consequently yields $\beta_{n}>0$ for all $n\in \mathbb{N}$. Then the problem \eqref{operator eigenvalue problem} admits  eigenpairs $(\mu_n,\,u_n)$ with $\mu_{n}\neq 0$  and $F\left(u_n\right)=\beta_n$ by Theorem \ref{H1-H4 theorem}. 

Since  $\chi=\infty$ and $F$ satisfies \eqref{Zeidler 44.A (3)}, the eigenvalues $\mu_{n}$ are distinct by Theorem \ref{H1-H4 theorem}. We see that \eqref{mu n} follows from
\begin{equation*}
\mu_n=\mu_n G\left(u_n\right)=\frac{\mu_n}{p}\left\langle G^{\prime} (u_n), u_n\right\rangle=\frac{1}{p}\left\langle F^{\prime}(u_n), u_n\right\rangle=F\left(u_n\right)=\beta_n.
\end{equation*}
\end{proof}

\begin{corollary}\label{L-S eigenvalues}
$\lambda_n:=\frac{1}{\mu_n}$
is an eigenvalue of \eqref{eigenvalue problem} for each $n\in \mathbb{N}$. Moreover, $\lambda_n \to \infty$ as $n\to\infty$.
\end{corollary}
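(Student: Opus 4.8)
The plan is simply to unwind the abstract formulation back to the weak eigenvalue equation of Definition~\ref{def:eigenfunction}. By the theorem just proved, for every $n\in\mathbb{N}$ the problem \eqref{operator eigenvalue problem} admits an eigenpair $(\mu_n,u_n)$ with $u_n\in\mathcal{G}$ and $\mu_n=\beta_n>0$; in particular $\lambda_n:=1/\mu_n$ is well defined and positive. First I would write out $F^{\prime}(u_n)=\mu_n G^{\prime}(u_n)$ using the G\^ateaux derivatives computed in this section: for every $\varphi\in\mathcal{W}_{X,0}^{1,p}(\Omega)$,
$$
p\int_{\Omega}|u_n|^{p-2}u_n\varphi\,dx=\mu_n\,p\int_{\Omega}|Xu_n|^{p-2}Xu_n\cdot X\varphi\,dx .
$$
Dividing through by $p\mu_n$, which is legitimate since $\mu_n>0$, gives exactly
$$
\int_{\Omega}|Xu_n|^{p-2}Xu_n\cdot X\varphi\,dx=\lambda_n\int_{\Omega}|u_n|^{p-2}u_n\varphi\,dx
$$
for all $\varphi\in\mathcal{W}_{X,0}^{1,p}(\Omega)$, i.e.\ \eqref{eigenfunction}. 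Since $u_n$ lies on the unit sphere $\mathcal{G}$ we have $\|u_n\|=1$, so $u_n\in\mathcal{W}_{X,0}^{1,p}(\Omega)\setminus\{0\}$ is an admissible eigenfunction, and therefore $\lambda_n$ is an eigenvalue of \eqref{eigenvalue problem}.

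For the last assertion, the theorem just proved yields $\mu_n\to 0^{+}$ as $n\to\infty$, hence $\lambda_n=1/\mu_n\to+\infty$.

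I do not expect a genuine obstacle here: the corollary is essentially a translation, and all the substantive work --- verifying \textbf{(H1)}--\textbf{(H4)} in Propositions~\ref{H2 property} and \ref{proposition H3} and invoking the Lusternik--Schnirelman principle in Theorem~\ref{H1-H4 theorem} --- has already been carried out. The only two points that need care are that $\mu_n$ is \emph{strictly} positive, so that $1/\mu_n$ makes sense and is positive (this follows from $\mu_n=\beta_n>0$, established in the proof of the preceding theorem), and that the candidate eigenfunction $u_n$ is nonzero, which is immediate from $u_n\in\mathcal{G}$.
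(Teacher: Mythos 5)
Your proposal is correct and follows exactly the route the paper intends: the corollary is left without proof precisely because it is the unwinding of \eqref{operator eigenvalue problem} back to \eqref{eigenfunction} via $\mu=\lambda^{-1}$, with $\mu_n=\beta_n>0$ and $u_n\in\mathcal{G}$ guaranteeing that $\lambda_n$ is well defined and $u_n\neq 0$, and $\mu_n\to 0^{+}$ giving $\lambda_n\to\infty$. Nothing is missing.
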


We can find these eigenvalues on  $\mathcal{F}$ defined by 
\begin{equation*}
\mathcal{F}:=\{u \in \mathcal{W}_{X,0}^{1, p}(\Omega):\enspace F(u)=1\}. 
\end{equation*}
Note that $\mathcal{F}$ is a  symmetric closed $C^1$-Finsler manifold. Indeed, since $F$ is even,  $\mathcal{F}$ is symmetric. Continuity of $F$ implies that $\mathcal{F}=F^{-1}(\{1\})$ is closed. Moreover, we have $\langle F^{\prime} (u), u\rangle=pF(u)$ for all $u\in\mathcal{W}_{X,0}^{1, p}(\Omega)$, so zero is the  only one critical value of $F$.  By the implicit function theorem, $\mathcal{F}$ is a $C^1$-Finsler manifold.
 
Let 
$$E(u):=\frac{G(u)}{F(u)}\quad \text{for } u\in \mathcal{W}_{X,0}^{1, p}(\Omega)\setminus\{0\}.$$
Given $u\in\mathcal{F}$, let 
$$
T_u \mathcal{F}:=\left\{v \in \mathcal{W}_{X, 0}^{1, p}(\Omega):\left\langle F^{\prime}(u), v\right\rangle=0\}\right.
$$
be the tangent space of $\mathcal{F}$ at $u$. The tangential mapping $\left(\left.E\right|_{\mathcal{F}}\right)^{\prime}(u)$ with respect to $\mathcal{F}$ at each $u\in M$ is given by
$$
E|_{\mathcal{F}}^{\prime}(u)[v]=\left\langle E^{\prime}(u), v\right\rangle \quad\text { for all } v \in T_u \mathcal{F}.
$$

\begin{definition}\label{Palais-Smale at c}
We say that the restriction $E|_{\mathcal{F}}$ satisfies the Palais--Smale condition at level $c \in \mathbb{R}$ on $\mathcal{F}$, in short $\mathrm{(PS)}_{c, \mathcal{F}}$, if every sequence $\{u_n\} \subset \mathcal{F}$ such that
\begin{enumerate}[label=(\roman*)]
\item $E(u_n) \to c$;
\item $\| (E|_{\mathcal{F}})'(u_n)\|_{(T_{u_n}\mathcal{F})^*} \to 0$,
\end{enumerate}
admits a convergent subsequence in $\mathcal{F}$.
\end{definition}
Given $u\in \mathcal{F}$, we define the linear continuous projection  $\mathcal{P}_u: \mathcal{W}_{X, 0}^{1, p}(\Omega) \rightarrow T_u\mathcal{F}$ explictly by 
$$
\mathcal{P}_u v:=v-\frac{\left\langle F^{\prime}(u), v\right\rangle}{\left\|F^{\prime}(u)\right\|_{\mathcal{W}_{X,0}^{1, p}(\Omega)^*}^2} J^{-1} F^{\prime}(u),
$$
where $J:\mathcal{W}_{X, 0}^{1, p}(\Omega)\to \mathcal{W}_{X, 0}^{1, p}(\Omega)^*$ is the duality mapping, see \cite[Proposition 47.19]{Zeidler1985}.
Then
$$
\left\|\mathcal{P}_u v\right\| \leq\|v\|+\frac{\left\|F^{\prime}(u)\right\|_{\mathcal{W}_{X,0}^{1, p}(\Omega)^*}\|v\|}{\left\|F^{\prime}(u)\right\|_{\mathcal{W}_{X,0}^{1, p}(\Omega)^*}^2}\left\|F^{\prime}(u)\right\|_{\mathcal{W}_{X,0}^{1, p}(\Omega)^*}=2\|v\|,
$$
where we have used $\left\|J^{-1} F^{\prime}(u)\right\|=\left\|F^{\prime}(u)\right\|_{\mathcal{W}_{X,0}^{1, p}(\Omega)^*}$.
Then
$$
\begin{aligned}
\langle G^{\prime}\left(u\right)-E\left(u\right) F^{\prime}\left(u\right), \mathcal{P}_u v\rangle&=\langle E|_{\mathcal{F}}^{\prime}(u), \mathcal{P}_uv\rangle \\ &\leq\|E|_{\mathcal{F}}^{\prime}(u)\|_{\left(T_{u} \mathcal{F}\right)^*}\|\mathcal{P}_u v\| 
\leq 2 \|E|_{\mathcal{F}}^{\prime}(u)\|_{\left(T_{u} \mathcal{F}\right)^*}\|v\|
\end{aligned}
$$
for all $v \in \mathcal{W}_{X, 0}^{1, p}(\Omega)$. 

\begin{lemma}\label{Palais Smale lemma}
$E|_{\mathcal{F}}$ satisfies $\mathrm{(PS)_{c, \mathcal{F}}}$ for all   $c\in\mathbb{R}$.
\end{lemma}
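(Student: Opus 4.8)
The plan is to fix a Palais--Smale sequence $\{u_n\}\subset\mathcal F$ at level $c$ and to upgrade the constrained condition $(E|_{\mathcal F})'(u_n)\to 0$ into an honest strong convergence statement for $G'(u_n)$, from which the continuous invertibility of $G'$ gives strong convergence of $\{u_n\}$. Write $\mathcal{W}:=\mathcal{W}_{X,0}^{1,p}(\Omega)$ and $\mathcal{W}^{*}$ for its dual. Since $F(u_n)=1$ on $\mathcal F$, we have $E(u_n)=G(u_n)=\|u_n\|^{p}$, so the hypothesis $E(u_n)\to c$ forces $\|u_n\|^{p}\to c$ (in particular $c\ge 0$) and $\{u_n\}$ is bounded in $\mathcal{W}$. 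By reflexivity and the compact embedding (Theorem~\ref{compact Sobolev embedding}), after passing to a subsequence $u_n\rightharpoonup u$ in $\mathcal{W}$ and $u_n\to u$ in $L^{p}(\Omega)$; the latter yields $1=F(u_n)\to F(u)$, so $u\in\mathcal F$ and, in particular, $u\neq 0$.

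The key step is to produce an unconstrained Euler relation in the limit. Since $\langle F'(v),v\rangle=pF(v)=p\neq 0$ on $\mathcal F$, the annihilator of the tangent space $T_{u_n}\mathcal F=\ker F'(u_n)$ is $\mathrm{span}\,F'(u_n)$, so by Hahn--Banach $\|(E|_{\mathcal F})'(u_n)\|_{*}=\inf_{\mu\in\mathbb R}\|E'(u_n)-\mu F'(u_n)\|_{\mathcal{W}^{*}}$; hence there are $\mu_n\in\mathbb R$ with $E'(u_n)-\mu_n F'(u_n)\to 0$ in $\mathcal{W}^{*}$. As $E$ is $0$-homogeneous we have $\langle E'(u_n),u_n\rangle=0$, so pairing with the bounded sequence $u_n$ gives $-p\mu_n F(u_n)=-p\mu_n\to 0$, i.e. $\mu_n\to 0$; since $\|F'(u_n)\|_{\mathcal{W}^{*}}\le p\|u_n\|_{p}^{p-1}=p$ is bounded, it follows that $E'(u_n)\to 0$ in $\mathcal{W}^{*}$. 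A direct computation on $\mathcal F$ gives $E'(v)=G'(v)-G(v)F'(v)=G'(v)-E(v)F'(v)$, hence $G'(u_n)=E'(u_n)+E(u_n)F'(u_n)$.

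Now pass to the limit on the right-hand side: $E'(u_n)\to 0$, $E(u_n)\to c$, and by the strong continuity of $F'$ established in Proposition~\ref{H2 property} (together with $u_n\rightharpoonup u$) one has $F'(u_n)\to F'(u)$ in $\mathcal{W}^{*}$; since the norms $\|F'(u_n)\|_{\mathcal{W}^{*}}$ are bounded, $E(u_n)F'(u_n)\to cF'(u)$ in $\mathcal{W}^{*}$, and therefore $G'(u_n)\to cF'(u)$ strongly in $\mathcal{W}^{*}$. Finally, by Proposition~\ref{G' is continuously invertible} the map ${G'}^{-1}:\mathcal{W}^{*}\to\mathcal{W}$ is continuous, so
$$u_n={G'}^{-1}\bigl(G'(u_n)\bigr)\longrightarrow {G'}^{-1}\bigl(cF'(u)\bigr)\quad\text{strongly in }\mathcal{W},$$
and by uniqueness of weak limits this limit equals $u$; thus $\{u_n\}$ has a strongly convergent subsequence, which is exactly $(PS)_{c}$. (Alternatively, once $G'(u_n)\to cF'(u)$ strongly one can bypass the inverse operator: $\langle G'(u_n)-G'(u),u_n-u\rangle\to 0$ because $\langle G'(u_n),u_n\rangle=pE(u_n)\to pc=\langle cF'(u),u\rangle$, and then the uniform monotonicity \eqref{B is uniformly monotone} forces $\|u_n-u\|\to 0$.)

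I expect the only real subtlety to be the second paragraph: correctly identifying the Lagrange multiplier $\mu_n$ and proving $\mu_n\to 0$, which is what turns the manifold-constrained Palais--Smale condition into the strong convergence $G'(u_n)\to cF'(u)$. Once that is in hand the conclusion is immediate from the three compactness ingredients already proved --- $L^{p}$-compactness of the embedding, strong continuity of $F'$, and continuous invertibility of $G'$.
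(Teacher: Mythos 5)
Your proof is correct and follows essentially the same route as the paper: boundedness of $\{u_n\}$, weak convergence plus the compact embedding, strong continuity of $F'$, and continuous invertibility of $G'$ to obtain $u_n\to {G'}^{-1}(cF'(u))$. The only difference is that you explicitly justify, via the Lagrange-multiplier/Hahn--Banach argument and the $0$-homogeneity of $E$, why the constrained condition yields $G'(u_n)-E(u_n)F'(u_n)\to 0$, a step the paper takes for granted by writing $(E|_{\mathcal F})'(u_n)=G'(u_n)-E(u_n)F'(u_n)$ directly.
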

\begin{proof}
Let $\left\{u_n\right\} \subset \mathcal{F}$ be such that
\begin{enumerate}[label=(\roman*)]
\item $E(u_{n})\to c$;
\item $\left\|E|_{\mathcal{F}}^{\prime}(u_n)\right\|_{\left(T_{u_n} \mathcal{F}\right)^*} \rightarrow 0$
\end{enumerate}    
for some $c\in\mathbb{R}.$ Then  
\begin{equation}\label{some inequality}
\sup _{\|v\| \leq 1}\left\langle G^{\prime}(u_{n})-E(u_{n})F^{\prime}(u_{n}), P_{u_n} v\right\rangle \leq 2\sup _{\|v\| \leq 1} \|E|_{\mathcal{F}}^{\prime}(u)\|_{\left(T_{u} \mathcal{F}\right)^*}\|v\| \rightarrow 0 .
\end{equation}
Hence,
$$
\begin{aligned}
& G^{\prime}\left(u_n\right)-E\left(u_n\right) F^{\prime}\left(u_n\right)-\frac{\left\langle G^{\prime}\left(u_n\right)-E\left(u_n\right) F^{\prime}\left(u_n\right), J^{-1} F^{\prime}\left(u_n\right)\right\rangle}{\left\|F^{\prime}\left(u_n\right)\right\|^2} F^{\prime}\left(u_n\right) \\
& =G^{\prime}\left(u_n\right)-\frac{\left\langle G^{\prime}\left(u_n\right), J^{-1} F^{\prime}\left(u_n\right)\right\rangle}{\left\|F^{\prime}\left(u_n\right)\right\|^2} F^{\prime}\left(u_n\right)\to 0.
\end{aligned}
$$
since $\left\langle F^{\prime}\left(u_n\right), J^{-1} F^{\prime}\left(u_n\right)\right\rangle=\left\|F^{\prime}\left(u_n\right)\right\|^2$.

Our aim is to show that $\{u_n\}$ has a convergent subsequence. It is apparent to see that $\{u_n\}$ is bounded in $\mathcal{W}_{X,0}^{1, p}(\Omega)$, say $
\left\|u_n\right\| \leq M$.
 By Theorem \ref{compact Sobolev embedding},  we have a subsequence, still denoted by   $\{u_n\}$, such that  $u_n\rightharpoonup u$ in $\mathcal{W}_{X,0}^{1, p}(\Omega)$ and  $u_n\to u$ in $L^{p}(\Omega)$. Moreover, from the strong continuity of $F^{\prime}$, it follows that  $F^{\prime}(u_n)\to F^{\prime}(u)$ in $\mathcal{W}_{X,0}^{1, p}(\Omega)^{*}$. From
$$
\left\|F^{\prime}\left(u_n\right)\right\| \geq \frac{\left|\left\langle F^{\prime}\left(u_n\right), u_n\right\rangle\right|}{\left\|u_n\right\|}=\frac{p}{\left\|u_n\right\|}\geq \frac{p}{M}>0. 
$$
we see that $F^{\prime}\left(u_n\right)$ is bounded away from zero. Since $\left\|G^{\prime}(u)\right\|=p\|u\|^{p-1}$,  it follows that $G^{\prime}\left(u_n\right)$ is bounded. Therefore, there exists a subsequence, still denoted by   $G^{\prime}\left(u_{n}\right)$, such that 
$$
G^{\prime}\left(u_{n}\right) \rightharpoonup v \quad \text { in } \mathcal{W}_{X, 0}^{1, p}(\Omega)^* 
$$
for some $v\in\mathcal{W}_{X, 0}^{1, p}(\Omega)^*$. By \textbf{(H3)} and Proposition \ref{proposition H3}, it follows that  $u_n\to u$ in $\mathcal{W}_{X, 0}^{1, p}(\Omega)$.

\end{proof}
We apply the following theorem to obtain another characterization of $\{\lambda_n\}$.
\begin{theorem}\label{Szulkin theorem}\cite[Corollary 4.1]{Szulkin1988}
Let $\mathcal{M}$  be a closed symmetric $C^1$-submanifold of a real Banach space $\mathbb{B}$ such that $0 \notin \mathcal{M}$. Let  $f \in C^1(\mathcal{M}, \mathbb{R})$ be an even and bounded below functional. Given $n\in\mathbb{N}$, we define
$$
c_n:=\inf _{\mathcal{A} \in \Gamma_n} \sup _{u \in \mathcal{A}} f(u),
$$
where 
$$\Gamma_n:=\{\mathcal{A} \subset \mathcal{M}: \mathcal{A}\text{ is compact and symmetric, } \gamma(\mathcal{A}) \geq n\}.$$ 
If $\Gamma_n \neq \varnothing$ for some $n\in\mathbb{N}$ and  $f$ satisfies $(PS)_{c,\mathcal{M}}$ for each $c=c_k$ with $k=1, \ldots, n$, then there are at least $n$ distinct pairs of critical points of $f$.
\end{theorem}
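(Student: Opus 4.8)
The plan is to run the standard Lusternik--Schnirelman minimax scheme on the $C^1$-Finsler manifold $\mathcal{M}$, the single non-elementary ingredient being an equivariant deformation lemma. First I would record the properties of the Krasnoselskii genus $\gamma$ that drive everything: monotonicity under odd continuous maps (if there is an odd continuous map $\mathcal{A}\to\mathcal{B}$ then $\gamma(\mathcal{A})\le\gamma(\mathcal{B})$), subadditivity $\gamma(\mathcal{A}\cup\mathcal{B})\le\gamma(\mathcal{A})+\gamma(\mathcal{B})$, the fact that a compact symmetric set $K$ with $0\notin K$ and $\gamma(K)<\infty$ has a symmetric open neighbourhood $N$ with $\gamma(\overline{N})=\gamma(K)$, and the fact that $\gamma(K)\ge 2$ forces $K$ to be infinite. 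Since $\Gamma_{n+1}\subseteq\Gamma_n$ and $f$ is bounded below on $\mathcal{M}$, the numbers $c_1\le c_2\le\cdots\le c_n$ are finite and nondecreasing, and $\Gamma_n\neq\varnothing$ forces $\Gamma_1,\dots,\Gamma_n\neq\varnothing$.

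The heart of the argument is an equivariant deformation lemma on $\mathcal{M}$: if $c\in\mathbb{R}$ and $f|_{\mathcal{M}}$ satisfies $(PS)_c$, and $K_c$ denotes the set of critical points of $f$ on $\mathcal{M}$ at level $c$ (which is compact by $(PS)_c$ together with continuity of $f'$), then for every symmetric open neighbourhood $N$ of $K_c$ --- the empty set being allowed when $K_c=\varnothing$ --- there exist $\varepsilon>0$ and an odd continuous map $\eta:\mathcal{M}\to\mathcal{M}$ with $\eta\big(\{f\le c+\varepsilon\}\setminus N\big)\subseteq\{f\le c-\varepsilon\}$. On a merely $C^1$ Finsler manifold there is no Riemannian gradient flow at one's disposal, so I would instead build a locally Lipschitz pseudo-gradient vector field for $f$ tangent to $\mathcal{M}$ in the sense of Palais, symmetrise it (replace $v(u)$ by $\tfrac{1}{2}(v(u)-v(-u))$ after identifying the relevant tangent spaces) so that its flow is odd, cut it off away from $N$ and outside the strip $\{|f-c|\le 2\varepsilon\}$, and integrate; $(PS)_c$ is precisely what keeps $\|f'\|$ bounded below on $\{c-2\varepsilon\le f\le c+2\varepsilon\}\setminus N$ for small $\varepsilon$, which makes the flow lower the sublevel set by a definite amount in finite time. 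This is exactly Szulkin's deformation theorem for $C^1$-Finsler manifolds, which I would invoke.

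Given the deformation lemma the rest is bookkeeping with the genus. If the nondecreasing sequence $c_1,\dots,c_n$ has a repeated value, say $c:=c_k=\cdots=c_{k+q-1}$ with $q\ge 2$ maximal, I claim $\gamma(K_c)\ge q$: otherwise pick a symmetric neighbourhood $N$ of $K_c$ with $\gamma(\overline{N})\le q-1$, obtain $\varepsilon$ and $\eta$ from the deformation lemma, choose $\mathcal{A}\in\Gamma_{k+q-1}$ with $\sup_{\mathcal{A}}f<c+\varepsilon$, and observe that $\overline{\mathcal{A}\setminus N}$ is compact symmetric with $\gamma(\overline{\mathcal{A}\setminus N})\ge(k+q-1)-(q-1)=k$ by subadditivity, so $\eta(\overline{\mathcal{A}\setminus N})\in\Gamma_k$ while $\sup f$ over it is $\le c-\varepsilon<c_k$, contradicting the definition of $c_k$. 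Hence $\gamma(K_c)\ge 2$, so $K_c$ is infinite and $f$ has infinitely many critical points, in particular at least $n$ pairs of them. Otherwise $c_1<c_2<\cdots<c_n$ are distinct, and each $c_j$ is a critical value: if $K_{c_j}=\varnothing$, the deformation lemma with $N=\varnothing$ pushes some $\mathcal{A}\in\Gamma_j$ with $\sup_{\mathcal{A}}f<c_j+\varepsilon$ into $\{f\le c_j-\varepsilon\}$ while keeping genus $\ge j$, contradicting $c_j=\inf_{\mathcal{A}\in\Gamma_j}\sup_{\mathcal{A}}f$. Finally, since $f$ is even and $0\notin\mathcal{M}$, every critical point $u$ is paired with the distinct critical point $-u$ at the same level, so $n$ distinct critical values produce at least $n$ distinct pairs of critical points. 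The main obstacle is the construction of the odd pseudo-gradient flow tangent to the $C^1$-Finsler manifold $\mathcal{M}$ and the verification that $(PS)_c$ makes it deform sublevel sets; the genus manipulations afterwards are routine.
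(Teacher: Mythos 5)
This theorem is imported by the paper as a citation (\cite[Corollary 4.1]{Szulkin1988}) and is not proved in the text, so there is no in-paper proof to compare against; I can only assess your sketch on its own terms and against Szulkin's original argument. Your outline is the standard Lusternik--Schnirelman minimax scheme and is essentially correct: the genus properties you list (monotonicity under odd continuous maps, subadditivity, the neighbourhood-continuity property, and $\gamma(K)\ge 2\Rightarrow K$ infinite) are exactly what is needed; the finiteness and monotonicity of $c_1\le\dots\le c_n$ follow as you say from $\Gamma_{n+1}\subseteq\Gamma_n$, compactness of the admissible sets, and boundedness below of $f$; and the two-case bookkeeping (a repeated level forces $\gamma(K_c)\ge q\ge 2$, hence infinitely many pairs, while distinct levels each carry at least one pair since $0\notin\mathcal{M}$ and $f$ is even) is the classical argument, with the small but correct observation that $\mathcal{A}\setminus N$ is already compact. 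You also correctly identify the genuine crux, namely the odd deformation on a merely $C^1$-Finsler manifold, where no gradient flow exists and one must build and symmetrise a locally Lipschitz pseudo-gradient field in the sense of Palais, with $(PS)_c$ supplying the lower bound on $\|f'\|$ in the strip away from $K_c$; this is precisely the content of Szulkin's deformation theorem, so your route coincides in substance with the proof in the cited source. The only caveat is that your argument is a sketch: the deformation lemma itself is invoked rather than proved, but since the paper itself treats the whole corollary as a black box, this level of detail is appropriate.
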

Given $n\in\mathbb{N}$, we define
\begin{equation*}
\mathcal{F}_n:=\left\{\mathcal{A} \subset \mathcal{F}: \mathcal{A} \right. \text{ is compact and symmetric, }\left.\gamma(\mathcal{A}) \geq n\right\}
\end{equation*}
and 
\begin{equation*}
\nu_n:=\inf _{\mathcal{A} \in \mathcal{F}_n} \sup _{u \in \mathcal{A}} E(u).
\end{equation*}

\begin{theorem}
The number $\nu_n$ is an eigenvalue of \eqref{eigenvalue problem}. Namely,
$$\nu_n=\lambda_n\quad \text{for all } n\in\mathbb{N}.$$

\end{theorem}

\begin{proof}
First we show that $\mathcal{F}_n\neq \varnothing$ for every $n\in \mathbb{N}$. Let $W_n$ be $n$-dimensional subspace of $L^{p}(\Omega)$. Then $W_n$ is closed in $L^{p}(\Omega)$. Also, let $S:=\{u\in L^{p}(\Omega): \|u\|_{p}=1\}$. Then $W_n\cap S$ is closed and bounded, that is, $W_n\cap S$ is compact. Moreover, $W_n\cap S$ is the unit sphere in $W_{n}$, so $\gamma(W_n\cap S)=n$ by \cite[Proposition 44.10]{Zeidler1985}. Let $P:L^{p}(\Omega)\to L^{p}(\Omega)$ defined by
$$P(u):= \begin{cases}u & \text { if } u \in \mathcal{W}_{X,0}^{1, p}(\Omega), \\
0 & \text { if } u \notin \mathcal{W}_{X,0}^{1, p}(\Omega),\end{cases}$$
which is linear, continuous and odd. Then $P(W_n\cap S)=W_n\cap\mathcal{F}$ is compact, and $\gamma(W_n\cap\mathcal{F})\geq n$ by  \cite[Lemma 3.5(1)]{Rab1973}. Since $W_n$ is a subspace and $\mathcal{F}$ is symmetric, it follows that $W_n\cap\mathcal{F}$ is also symmetric. We have shown that $W_n\cap\mathcal{F}\in \mathcal{F}_n$ for all $n\in\mathbb{N}$, thus $\mathcal{F}_n\neq \varnothing$ for all $n\in\mathbb{N}$.

Since $E|_{\mathcal{F}}\in C^{1}(\mathcal{F},\mathbb{R})$ is bounded below and $E|_{\mathcal{F}}$ satisfies $(P S)_{c, \mathcal{F}}$ for every $c=\nu_n$ with $n\in\mathbb{N}$, it follows from Theorem \ref{Szulkin theorem} that there is a sequence of critical points of $E|_{\mathcal{F}}$. Moreover,
$$\inf _{\mathcal{A} \in \mathcal{F}_n} \sup _{u \in \mathcal{A}} E(u)=\frac{1}{\sup _{\mathcal{A} \in \mathcal{G}_n} \inf _{u \in \mathcal{A}} F(u)},$$
thus $\nu_n=\lambda_n$ for all $n\in\mathbb{N}$.
\end{proof}

\section{Special collection of symmetric, compact subsets on $\mathcal{F}$}

Let 
\begin{equation}\label{F_n special}
 \mathbb{F}_n:=\{\mathcal{A} \subset \mathcal{F}:\enspace\text{there exists a continuous odd surjection }  \left.h: \mathbb{S}^{n-1} \rightarrow \mathcal{A}\right\}, 
\end{equation}
where $\mathbb{S}^{n-1}$ is the unit sphere in $\mathbb{R}^{n}$. It is known that $\gamma(\mathbb{S}^{n-1})=n$, and subsequently by \cite[Lemma 3.5(1)]{Rab1973}, we have
\begin{equation*}
\gamma(h(\mathbb{S}^{n-1}))\geq n.    
\end{equation*}
Since $\mathbb{S}^{n-1}$ is symmetric and $h$ is an odd surjection, $\mathcal{A}$ is  symmetric. Moreover, $h(\mathbb{S}^{n-1})=\mathcal{A}$ is compact.

We show that $\mathbb{F}_n$ is nonempty for all $n\in\mathbb{N}$.  Since $\gamma(\mathbb{S}^{n-1})=n$,  there exists a continuous odd map $\widetilde{h}:\mathbb{S}^{n-1}\to\mathbb{R}^{n}\setminus\{0\}$. Moreover, $\dim \mathbb{R}^n=\dim W_n=n$ implies that there is an isomorphism $g:\mathbb{R}^{n}\to W_n$. It is immediate that $g$ is linear and bounded, therefore, $g:\mathbb{R}^{n}\setminus\{0\}\to W_n\setminus\{0\}$ is odd and continuous. Let $f:W_n\setminus\{0\}\to W_n\cap S$ defined by $f(u):=\|u\|_{p}^{-1}u$ for all $W_n\setminus\{0\}$. We see that $f$ is odd and continuous. Then
$$h:=P\circ f\circ g\circ \widetilde{h}:\mathbb{S}^{n-1}\to W_n\cap\mathcal{F}$$ is odd and continuous. It is surjective, provided  $\mathcal{A}=h(\mathbb{S}^{n-1})$.

Now, we define
\begin{equation}\label{gamma sequence}
\gamma_{n}:=\inf _{\mathcal{A} \in \mathbb{F}_n} \sup _{u \in \mathcal{A}} E(u).
\end{equation}

\begin{definition}
We say that $E|_{\mathcal{F}}$ satisfies the  Palais-Smale condition on $\mathcal{F}$, in short $\mathrm{(PS)_{\mathcal{F}}}$, if  $\left\{u_n\right\}$ is a sequence in $\mathcal{F}$ such that
\begin{enumerate}[label=(\roman*)]
\item $\{E(u_{n})\}$ is bounded;
\item  $\left\|\left.E\right|_{\mathcal{F}} ^{\prime}\left(u_n\right)\right\|_{\left(T_{u_n} \mathcal{F}\right)^*} \rightarrow 0$, 
\end{enumerate}
then it admits a convergent subsequence.
\end{definition}
Note that, $E|_{\mathcal{F}}$ satisfies  $\mathrm{(PS)_{\mathcal{F}}}$ if and only if $\mathrm{(PS)_{c,\mathcal{F}}}$ holds for all $c \in \mathbb{R}$. Therefore, $E|_{\mathcal{F}}$ also satisfies  $\mathrm{(PS)_{\mathcal{F}}}$ by Lemma \ref{Palais Smale lemma}.

Let us recall a version of the deformation lemma presented in \cite{DR1999}.  The original idea goes back to Rabinowitz, see e.g. \cite{Rabinowitz1978}.
\begin{lemma}\label{deformation lemma} 
Let $K$ be a $C^1$-functional on a complete connected $C^1$-Finsler manifold $\mathbb{M}$ satisfying the Palais-Smale condition.
If $\alpha \in \mathbb{R}$ is a regular value of $K$ and  $\bar{\varepsilon}>0$, then there exist  $\varepsilon \in(0, \bar{\varepsilon})$ and a continuous deformation $\psi: \mathbb{M} \times[0,1] \rightarrow \mathbb{M}$ such that the following conditions hold:
\begin{enumerate}[label=(\roman*)]
\item $\psi(\cdot,t)$ is a homeomorphism for every $t \in[0,1]$;
    \item $\psi(u, t)=u$,  if $|K(u)-\alpha \mid \geq \bar{\varepsilon}$ or if $t=0$;
    \item $K(\psi(u, t))$ is nonincreasing in $t$ for every $u \in \mathcal{F}$;
    \item If $K(u) \leq \alpha+\varepsilon$, then $K(\psi(u, 1)) \leq\alpha-\varepsilon$;
    \item $\psi$ is odd with respect to $u$ for any $t \in[0,1]$.
\end{enumerate} 
\end{lemma}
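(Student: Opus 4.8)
The plan is to run the classical negative pseudo-gradient flow of the restricted functional $\left.E\right|_{\mathcal{F}}$, which is of class $C^1$ on the $C^1$-Finsler manifold $\mathcal{F}$, and to use the evenness of $E$ together with the symmetry of $\mathcal{F}$ to make that flow odd. First I would establish a uniform gradient bound near the level $\alpha$: since $\alpha$ is a regular value of $\left.E\right|_{\mathcal{F}}$ and $\left.E\right|_{\mathcal{F}}$ satisfies $(PS)_\alpha$ by Lemma \ref{Palais Smale lemma}, there are $\rho\in(0,\bar\varepsilon)$ and $\delta>0$ such that $\|(\left.E\right|_{\mathcal{F}})'(u)\|\ge\delta$ whenever $|\left.E\right|_{\mathcal{F}}(u)-\alpha|\le\rho$. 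Indeed, otherwise one finds $u_n$ with $\left.E\right|_{\mathcal{F}}(u_n)\to\alpha$ and $(\left.E\right|_{\mathcal{F}})'(u_n)\to0$; by $(PS)_\alpha$ a subsequence converges to some $u$, and continuity of $\left.E\right|_{\mathcal{F}}$ and of its derivative makes $u$ a critical point at level $\alpha$, contradicting regularity. In particular the slab $S_\rho:=\{u\in\mathcal{F}:|\left.E\right|_{\mathcal{F}}(u)-\alpha|\le\rho\}$ contains no critical point.

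Next I would construct an odd, bounded, locally Lipschitz pseudo-gradient field. On the open set $\mathcal{N}:=\{u\in\mathcal{F}:(\left.E\right|_{\mathcal{F}})'(u)\neq0\}\supset S_\rho$ pick a locally Lipschitz pseudo-gradient vector field $V$, that is, a section of $T\mathcal{F}$ with $\|V(u)\|<2\|(\left.E\right|_{\mathcal{F}})'(u)\|$ and $\langle(\left.E\right|_{\mathcal{F}})'(u),V(u)\rangle>\|(\left.E\right|_{\mathcal{F}})'(u)\|^2$; such a field exists on a $C^1$ Finsler manifold. Since $\sigma(u):=-u$ is a linear isometry of $\mathcal{F}$ with $\left.E\right|_{\mathcal{F}}\circ\sigma=\left.E\right|_{\mathcal{F}}$, replacing $V$ by its symmetrisation (the average of $V(u)$ with the pullback $-V(-u)\in T_u\mathcal{F}$) yields an odd locally Lipschitz pseudo-gradient field obeying the same two inequalities. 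On $S_\rho$ one has $\|V(u)\|>\|(\left.E\right|_{\mathcal{F}})'(u)\|\ge\delta$, so, choosing a locally Lipschitz \emph{even} cut-off $\eta:\mathcal{F}\to[0,1]$ with $\eta\equiv1$ on $\{|\left.E\right|_{\mathcal{F}}(u)-\alpha|\le\rho/2\}$ and $\eta\equiv0$ off $S_\rho$, the field $W(u):=-\eta(u)\,V(u)/\|V(u)\|$ extends by $0$ to a globally defined, odd, locally Lipschitz vector field tangent to $\mathcal{F}$ with $\|W\|\le1$ and $\langle(\left.E\right|_{\mathcal{F}})'(u),W(u)\rangle\le-\tfrac12\eta(u)\|(\left.E\right|_{\mathcal{F}})'(u)\|$.

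Then I would analyse the flow $\psi(u,\cdot)$ solving $\partial_t\psi=W(\psi)$, $\psi(u,0)=u$. Boundedness and tangency of $W$ give a global flow $\psi(\cdot,t):\mathcal{F}\to\mathcal{F}$; uniqueness (local Lipschitz continuity) gives the group law, so each $\psi(\cdot,t)$ is a homeomorphism with inverse $\psi(\cdot,-t)$, proving (i). Where $W$ vanishes — in particular where $|\left.E\right|_{\mathcal{F}}(u)-\alpha|\ge\rho$, hence a fortiori where this quantity is $\ge\bar\varepsilon$ — and at $t=0$ we have $\psi(u,t)=u$, which is (ii). Since $\tfrac{d}{dt}\left.E\right|_{\mathcal{F}}(\psi(u,t))=\langle(\left.E\right|_{\mathcal{F}})'(\psi),W(\psi)\rangle\le0$, we get (iii), and oddness of $W$ with uniqueness forces $\psi(-u,t)=-\psi(u,t)$, giving (v). For (iv), fix $\varepsilon\in(0,\min\{\rho/2,\delta/4\})$, suppose $\left.E\right|_{\mathcal{F}}(u)\le\alpha+\varepsilon$ but $\left.E\right|_{\mathcal{F}}(\psi(u,1))>\alpha-\varepsilon$; then by (iii) the orbit stays in $(\alpha-\varepsilon,\alpha+\varepsilon]\subset(\alpha-\rho/2,\alpha+\rho/2)$ on $[0,1]$, so $\eta(\psi(u,t))=1$ and $\|(\left.E\right|_{\mathcal{F}})'(\psi(u,t))\|\ge\delta$ there, whence $\tfrac{d}{dt}\left.E\right|_{\mathcal{F}}(\psi)\le-\delta/2$ and $\left.E\right|_{\mathcal{F}}(\psi(u,1))\le\alpha+\varepsilon-\delta/2<\alpha-\varepsilon$, a contradiction; hence $\left.E\right|_{\mathcal{F}}(\psi(u,1))\le\alpha-\varepsilon$.

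The hard part is the second step: producing a genuine locally Lipschitz pseudo-gradient vector field on the merely $C^1$ Finsler manifold $\mathcal{F}$, checking it is tangent to $\mathcal{F}$ so the flow respects the constraint, and carrying out the odd symmetrisation and even cut-off without spoiling the pseudo-gradient inequalities or local Lipschitz continuity. One may instead avoid intrinsic Finsler language by working in the ambient space $\mathcal{W}_{X,0}^{1, p}(\Omega)$ and using $\langle F'(u),u\rangle=p\neq0$ on $\mathcal{F}$ to build an explicit projection onto $T_u\mathcal{F}$; the existence of a locally Lipschitz pseudo-gradient is the crux in either approach.
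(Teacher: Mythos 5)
The paper offers no proof of this lemma at all: it is imported as ``a version of the deformation lemma presented in \cite{DR1999}'', so the only comparison to make is between your argument and the classical result being cited. Your sketch is the standard symmetric quantitative deformation lemma and is essentially correct as an unpacking of that citation. The uniform bound $\|(E|_{\mathcal{F}})'(u)\|\geq\delta$ on the slab follows from regularity of $\alpha$ together with Lemma \ref{Palais Smale lemma}, which supplies $(PS)_c$ for every $c$ and in particular at $c=\alpha$; the odd symmetrisation of the pseudo-gradient is legitimate because $u\mapsto -u$ is a linear isometry preserving $E$ and the set of pseudo-gradient vectors at a point is convex, so the average is again a pseudo-gradient; and your choice $\varepsilon<\min\{\rho/2,\delta/4\}$ together with the integration of $\tfrac{d}{dt}E|_{\mathcal{F}}(\psi)\leq-\delta/2$ along orbits trapped in the slab gives (iv), while fixing all points with $|E|_{\mathcal{F}}(u)-\alpha|\geq\rho\supset\{|E|_{\mathcal{F}}(u)-\alpha|\geq\bar\varepsilon\}$ gives (ii). The one genuinely nontrivial ingredient is exactly the one you flag: producing a locally Lipschitz pseudo-gradient section of $T\mathcal{F}$ on a merely $C^1$ Finsler manifold and checking that its flow stays on $\mathcal{F}$; this is Palais' classical construction (with the odd version as in Rabinowitz and Szulkin), and it is precisely the content packaged by \cite{DR1999}, so your route is not different from the paper's so much as a proof of what the paper takes as given. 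If you write it up, two details are worth making explicit: take the cut-off $\eta$ to be a Lipschitz function of $E|_{\mathcal{F}}(u)$ alone, so that it is automatically even and locally Lipschitz, and arrange the gradient bound on a slab slightly larger than $\operatorname{supp}\eta$ so that the normalised field $V/\|V\|$ is locally Lipschitz on an open neighbourhood of that support before you extend by zero.
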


\begin{theorem}\label{gamma_n is a critical value}
The number $\gamma_{n}$ is a critical value of $E|_{\mathcal{F}}$.
\end{theorem}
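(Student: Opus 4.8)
The plan is to argue by contradiction, using the deformation lemma (Lemma \ref{deformation lemma}) together with the fact that the class $\mathbb{F}_n$ is preserved under the time-one map of an odd deformation of $\mathcal{F}$. Before that I would record that $\gamma_n$ is a genuine real number: since $\mathbb{F}_n\neq\varnothing$ for every $n$ (shown just above) and every $\mathcal{A}\in\mathbb{F}_n$ is compact, while $E|_{\mathcal{F}}=G$ is continuous and strictly positive on $\mathcal{F}$ — indeed $E(u)=\int_{\Omega}|Xu|^p\,dx\geq\lambda_1>0$ for $u\in\mathcal{F}$ by the definition of $\lambda_1$ and the Poincaré--Friedrichs inequality \eqref{Poincare inequality} — the number $\sup_{u\in\mathcal{A}}E(u)$ is finite and bounded below by $\lambda_1$, so $\lambda_1\leq\gamma_n<\infty$.

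Next I would suppose, toward a contradiction, that $\gamma_n$ is a regular value of $E|_{\mathcal{F}}$. Since $E|_{\mathcal{F}}$ satisfies $(PS)_c$ for every $c\in\mathbb{R}$ by Lemma \ref{Palais Smale lemma}, Lemma \ref{deformation lemma} applies with $\alpha=\gamma_n$ and any fixed $\bar\varepsilon>0$, yielding $\varepsilon\in(0,\bar\varepsilon)$ and a continuous deformation $\psi:\mathcal{F}\times[0,1]\to\mathcal{F}$ enjoying properties (i)--(v). By the definition \eqref{gamma sequence} of $\gamma_n$ as an infimum, I would pick $\mathcal{A}\in\mathbb{F}_n$ with $\sup_{u\in\mathcal{A}}E(u)\leq\gamma_n+\varepsilon$ and fix a continuous odd surjection $h:\mathbb{S}^{n-1}\to\mathcal{A}$ witnessing $\mathcal{A}\in\mathbb{F}_n$.

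The key step is then to check that the deformed set $\bar{\mathcal{A}}:=\psi(\mathcal{A},1)$ again belongs to $\mathbb{F}_n$. The composition $\bar h:=\psi(\cdot,1)\circ h:\mathbb{S}^{n-1}\to\mathcal{F}$ is continuous, and it is odd because $h$ is odd and $\psi(\cdot,1)$ is odd by property (v); moreover $\bar h(\mathbb{S}^{n-1})=\psi(\mathcal{A},1)=\bar{\mathcal{A}}$, so $\bar h$ is a continuous odd surjection of $\mathbb{S}^{n-1}$ onto $\bar{\mathcal{A}}\subset\mathcal{F}$, which is exactly the requirement \eqref{F_n special} for $\bar{\mathcal{A}}\in\mathbb{F}_n$. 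This invariance of $\mathbb{F}_n$ under an odd deformation is the conceptual heart of the argument and is precisely why $\mathbb{F}_n$ was defined via odd images of spheres; I expect it, rather than any estimate, to be the point requiring care — specifically the facts that $\psi(\cdot,1)$ maps $\mathcal{F}$ into $\mathcal{F}$ (so $\bar{\mathcal{A}}\subset\mathcal{F}$) and that oddness survives the composition, both of which are immediate from the statement of Lemma \ref{deformation lemma}.

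Finally I would close the loop: for every $u\in\mathcal{A}$ we have $E|_{\mathcal{F}}(u)\leq\gamma_n+\varepsilon=\alpha+\varepsilon$, so property (iv) gives $E|_{\mathcal{F}}(\psi(u,1))\leq\alpha-\varepsilon=\gamma_n-\varepsilon$; taking the supremum over $u\in\mathcal{A}$ yields $\sup_{w\in\bar{\mathcal{A}}}E(w)\leq\gamma_n-\varepsilon$. Since $\bar{\mathcal{A}}\in\mathbb{F}_n$, the definition of $\gamma_n$ forces $\gamma_n\leq\sup_{w\in\bar{\mathcal{A}}}E(w)\leq\gamma_n-\varepsilon<\gamma_n$, a contradiction. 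Hence $\gamma_n$ is not a regular value, i.e. it is a critical value of $E$ on $\mathcal{F}$, completing the proof.
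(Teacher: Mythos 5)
Your proposal is correct and follows essentially the same route as the paper: argue by contradiction, apply the deformation lemma with $\alpha=\gamma_n$, pick $\mathcal{A}\in\mathbb{F}_n$ nearly attaining the infimum, observe that $\psi(\cdot,1)\circ h$ witnesses $\psi(\mathcal{A},1)\in\mathbb{F}_n$, and use property (iv) to contradict the definition of $\gamma_n$. The only additions are the (harmless and in fact welcome) preliminary check that $\gamma_n$ is finite and positive, and the explicit invocation of the Palais--Smale condition, which the paper leaves implicit.
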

\begin{proof}
Assume for contradiction that  $\gamma_{n}$ is not a critical value of $E|_{\mathcal{F}}$, that is, a regular value of $E|_{\mathcal{F}}$. Setting $\bar{\varepsilon}=1$ and $\alpha=\gamma_n$, let $\varepsilon \in(0, \bar{\varepsilon})$. Then there exists a continuous deformation $\psi: \mathcal{F} \times[0,1] \rightarrow \mathcal{F}$ by Lemma \ref{deformation lemma}. By definition of $\gamma_n$, there exists a set $\mathcal{A} \in \mathbb{F}_n$ such that 
$$\sup_{u\in \mathcal{A}}E(u)\leq \gamma_n+\varepsilon.$$
By definition of $\mathbb{F}_n$,  there exists a continuous odd surjection $h: \mathbb{S}^{n-1} \rightarrow \mathcal{A}$, so $\psi(h(\cdot),1):\mathbb{S}^{n-1}\to\psi(\mathcal{A},1) $ is also a continuous odd surjection. Hence, $\psi(\mathcal{A},1)\in \mathbb{F}_n$, and by Lemma \ref{deformation lemma},
$$\sup_{u\in \psi(\mathcal{A},1)}E(u)\leq \gamma_n-\varepsilon.$$
This is a contradiction, so $\gamma_{n}$ is a critical value of $E$ on $\mathcal{F}$.
\end{proof}

\begin{theorem}
The number $\gamma_n$ given by \eqref{gamma sequence} is an eigenvalue of \eqref{eigenvalue problem} for each $n\in\mathbb{N}$.
\end{theorem}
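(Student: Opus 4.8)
The plan is to derive this from the theorem immediately preceding it, which already establishes that $\gamma_n$ is a critical value of $E$ on $\mathcal{F}$, by showing that a constrained critical point of $E|_{\mathcal{F}}$ at that level is an eigenfunction of \eqref{eigenvalue problem}. First I would invoke the previous theorem to pick a point $u\in\mathcal{F}$ with $E(u)=\gamma_n$ at which the derivative of $E|_{\mathcal{F}}$ vanishes.

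Next I would convert this into an Euler--Lagrange identity in ${\mathcal{W}_{X,0}^{1, p}(\Omega)}^{*}$. Since $\mathcal{F}$ is a $C^1$-Finsler manifold not containing $0$, the Lagrange multiplier rule produces $\sigma\in\mathbb{R}$ with $E^{\prime}(u)=\sigma F^{\prime}(u)$; computing the free derivative of $E=G/F$ on $\mathcal{F}$ (exactly as already done in the proof of Lemma \ref{Palais Smale lemma}) gives $E^{\prime}(u)=G^{\prime}(u)-E(u)F^{\prime}(u)$. To pin down $\sigma$ I would pair $E^{\prime}(u)=\sigma F^{\prime}(u)$ with $u$: since $E$ is positively $0$-homogeneous one has $\langle E^{\prime}(u),u\rangle=0$ by Euler's identity, while $\langle F^{\prime}(u),u\rangle=pF(u)=p\neq 0$, forcing $\sigma=0$. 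Hence $G^{\prime}(u)=E(u)F^{\prime}(u)=\gamma_n F^{\prime}(u)$.

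Finally I would unwind the explicit formulas for $F^{\prime}$ and $G^{\prime}$: the relation $G^{\prime}(u)=\gamma_n F^{\prime}(u)$ reads
$$\int_{\Omega}|X u|^{p-2} Xu \cdot X \varphi \, d x=\gamma_n \int_{\Omega}|u|^{p-2} u \varphi \, d x \qquad \text{for all }\varphi\in \mathcal{W}_{X,0}^{1, p}(\Omega),$$
which is precisely \eqref{eigenfunction} with $\lambda=\gamma_n$. Since $u\in\mathcal{F}$ forces $F(u)=1$, in particular $u\neq 0$, so $u$ is an eigenfunction and $\gamma_n$ is an eigenvalue of \eqref{eigenvalue problem}.

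I expect the only delicate point to be the passage from the abstract statement ``$\gamma_n$ is a critical value of $E|_{\mathcal{F}}$'' to the unconstrained Euler--Lagrange relation $G^{\prime}(u)=\gamma_n F^{\prime}(u)$, i.e. checking that the Lagrange multiplier vanishes; the $0$-homogeneity of $E$ together with $F^{\prime}(u)\neq 0$ (equivalently $0\notin\mathcal{F}$, which also underlies the applicability of Theorem \ref{Szulkin theorem}) are exactly the ingredients that make this work, and everything else is bookkeeping already carried out earlier in the paper.
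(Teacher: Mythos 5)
Your proposal is correct and follows essentially the same route as the paper: invoke the preceding theorem to obtain a critical point $u$ of $E|_{\mathcal{F}}$ with $E(u)=\gamma_n$, deduce $G^{\prime}(u)=\gamma_n F^{\prime}(u)$, and read this off as \eqref{eigenfunction} with $\lambda=\gamma_n$. The only difference is that you justify the step $E|_{\mathcal{F}}^{\prime}(u)=0\Rightarrow G^{\prime}(u)=\gamma_n F^{\prime}(u)$ explicitly via the Lagrange multiplier rule and the $0$-homogeneity of $E$, whereas the paper simply writes $E|_{\mathcal{F}}^{\prime}=G^{\prime}-EF^{\prime}$ and concludes; your added detail is a welcome clarification, not a deviation.
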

\begin{proof}
By Theorem \ref{gamma_n is a critical value},  $\gamma_{n}$ is a critical value of $E$ on $\mathcal{F}$, that is, there exist $\mathcal{A}\in\mathbb{F}_n$ and  $u_{n}\in\mathcal{A}$ such that $E(u_n)=\gamma_n$ and $E|_{\mathcal{F}}^{\prime}(u_{n})=0$. Since  $E|_{\mathcal{F}}^{\prime}=G^{\prime}-EF^{\prime}$, we see that $$E|_{\mathcal{F}}^{\prime}(u_{n})=G^{\prime}(u_{n})-E(u_{n})F^{\prime}(u_{n})=G^{\prime}(u_{n})-\gamma_{n}F^{\prime}(u_{n})=0.$$
\end{proof}
Finally, every $\mathcal{F}_n$ contains  $\mathbb{F}_n$,  so $
\lambda_n \leq \gamma_n$ for all $n\in\mathbb{N}$.
By Corollary \ref{L-S eigenvalues}, $\gamma_n\to \infty$  as $n\to\infty$.

\section{Appendix}
Assume that $k\in \mathbb{N}$.
\begin{lemma}\label{Lemma 5.1}\cite{DJM2001}
If $1<p\leq 2$, then there exists $C=C(p)>0$ such that
\begin{equation}\label{GM1975 p<2}
\left|\left| \omega_{1}\right|^{p-2} \omega_{1}-|\omega_{2}|^{p-2} \omega_{2}\right|\leq C| \omega_{1}-\left.\omega_{2}\right|^{p-1}  
\end{equation}
holds for all $\omega_{1}, \omega_{2} \in \mathbb{R}^{k}$.  If $p\geq 2$, then there exists $C=C(p)>0$ such that
\begin{equation}\label{GM1975 p>2}
\left|\left| \omega_{1}\right|^{p-2} \omega_{1}-|\omega_{2}|^{p-2} \omega_{2}\right|\leq C| \omega_{1}-\omega_{2} \mid(|\omega_{1}|+|\omega_{2}|)^{p-2}    
\end{equation}
holds for all $\omega_{1}, \omega_{2} \in \mathbb{R}^{k}$. 
\end{lemma}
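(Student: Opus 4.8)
The plan is to derive both bounds by analysing the vector field $F(\omega):=|\omega|^{p-2}\omega$, $\omega\in\mathbb R^{k}$ (with $F(0):=0$), which is continuously differentiable away from the origin with
\[
DF(\omega)\xi=|\omega|^{p-2}\xi+(p-2)|\omega|^{p-4}(\omega\cdot\xi)\,\omega,
\]
a symmetric matrix whose eigenvalues are $|\omega|^{p-2}$ and $(p-1)|\omega|^{p-2}$, so that the operator norm of $DF(\omega)$ is at most $\max\{1,p-1\}\,|\omega|^{p-2}$. Throughout one may assume $\omega_1\neq\omega_2$ and, by the symmetry $(\omega_1,\omega_2)\mapsto(\omega_2,\omega_1)$, that $|\omega_1|\geq|\omega_2|$.

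For $p\geq 2$ the argument is short. Since $p-2\geq 0$, the differential extends continuously by $0$ at the origin, so $F\in C^{1}(\mathbb R^{k})$, and the fundamental theorem of calculus along the segment joining $\omega_2$ to $\omega_1$ gives $F(\omega_1)-F(\omega_2)=\int_{0}^{1}DF\big((1-t)\omega_2+t\omega_1\big)(\omega_1-\omega_2)\,dt$. Because $\big|(1-t)\omega_2+t\omega_1\big|\leq(1-t)|\omega_2|+t|\omega_1|\leq|\omega_1|+|\omega_2|$ and $p-2\geq 0$, bounding the integrand by $(p-1)(|\omega_1|+|\omega_2|)^{p-2}$ and integrating yields \eqref{GM1975 p>2} with $C=p-1$.

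The substantive case is $1<p<2$, where $DF$ is unbounded near $0$ and a naive mean value estimate fails. Here I would first use the homogeneity $F(\lambda\omega)=\lambda^{p-1}F(\omega)$ for $\lambda>0$: rescaling $\omega_i\mapsto\omega_i/|\omega_1-\omega_2|$ reduces \eqref{GM1975 p<2} to showing that $|\omega_1-\omega_2|=1$ forces $|F(\omega_1)-F(\omega_2)|\leq C$. This normalized claim is handled by a two-case split. If $|\omega_2|\leq 2$ then $|\omega_1|\leq|\omega_2|+|\omega_1-\omega_2|\leq 3$, and the triangle inequality already gives $|F(\omega_1)-F(\omega_2)|\leq|\omega_1|^{p-1}+|\omega_2|^{p-1}\leq 3^{p-1}+2^{p-1}$. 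If $|\omega_2|>2$, then every point of the segment satisfies $\big|(1-t)\omega_2+t\omega_1\big|\geq|\omega_2|-t|\omega_1-\omega_2|=|\omega_2|-t>1$, so $F$ is $C^{1}$ along that segment; as the exponent $p-2$ is negative, $\big|(1-t)\omega_2+t\omega_1\big|^{p-2}\leq 1$, and the fundamental theorem of calculus gives $|F(\omega_1)-F(\omega_2)|\leq 1$. Taking $C:=3^{p-1}+2^{p-1}$ covers both cases, and undoing the rescaling yields \eqref{GM1975 p<2}.

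I expect the only real difficulty to be the singularity of $DF$ at the origin in the subquadratic range; the scaling reduction is what makes it tractable, since after normalization the troublesome configurations are exactly ``both vectors bounded'' (handled crudely) and ``both vectors large, hence far from $0$'' (handled by the fundamental theorem of calculus). An equivalent route avoiding the rescaling is to split directly according to whether $|\omega_1-\omega_2|\geq\tfrac12|\omega_2|$ or not and run the same two estimates; either way the constant depends on $p$ only. Both inequalities are classical, and for full details we refer to \cite{DJM2001}.
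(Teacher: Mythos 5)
The paper does not actually prove this lemma: it is stated in the appendix with only a citation to \cite{DJM2001}, so there is no in-paper argument to compare yours against, and a correct self-contained proof is a genuine addition. Your argument checks out. The differential computation is right: $DF(\omega)$ has eigenvalues $|\omega|^{p-2}$ and $(p-1)|\omega|^{p-2}$, hence operator norm at most $\max\{1,p-1\}\,|\omega|^{p-2}$, and for $p\geq 2$ the mean value estimate along the segment gives \eqref{GM1975 p>2} with $C=p-1$; the only point worth making explicit is that for $2<p<3$ the map $F$ is still $C^1$ at the origin because $\|DF(\omega)\|\to 0$ and the difference quotient at $0$ is $O(|\omega|^{p-1})$, which you do address. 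For $1<p<2$ the reduction by the degree-$(p-1)$ homogeneity of both sides to the normalized case $|\omega_1-\omega_2|=1$, followed by the dichotomy $|\omega_2|\leq 2$ (crude triangle inequality) versus $|\omega_2|>2$ (the segment stays at distance greater than $1$ from the origin, where $|\cdot|^{p-2}\leq 1$, so the fundamental theorem of calculus applies), correctly circumvents the singularity of $DF$ at $0$, and the constant $3^{p-1}+2^{p-1}$ covers both cases. The proof is sound and elementary, with $C$ depending only on $p$ as required.
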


\begin{lemma}\cite[Appendix]{Lin1990} \label{Lindqvist lemma}
If $1<p\leq 2$, then there exists $C=C(p)>0$ such that
\begin{equation}\label{vector inequality p<2}
\left(\left|\omega_{1}\right|^{p-2} \omega_{1}-\left|\omega_{2}\right|^{p-2} \omega_{2}\right) \cdot\left(\omega_{1}-\omega_{2}\right) \geq C \frac{\left|\omega_{1}-\omega_{2}\right|^2}{\left(\left|\omega_{2}\right|+\left|\omega_{1}\right|\right)^{2-p}}
\end{equation}
holds for all $\omega_{1}, \omega_{2} \in \mathbb{R}^{k}$. If $p\geq 2$, then there exists $C=C(p)>0$ such that
\begin{equation}\label{vector inequality p>2}
\left(|\omega_{1}|^{p-2} \omega_{1}-|\omega_{2}|^{p-2} \omega_{2}\right)\cdot(\omega_{1}-\omega_{2}) \geq C|\omega_{1}-\omega_{2}|^p 
\end{equation} 
holds for all $\omega_{1}, \omega_{2} \in \mathbb{R}^{k}$.
\end{lemma}

\begin{lemma}\label{Adams lemma}

If $1<p\leq 2$, then
\begin{equation}\label{Adams 1<p<2}
\left|\frac{\omega_{1}+\omega_{2}}{2}\right|^{p^{\prime}}+\left|\frac{\omega_{1}-\omega_{2}}{2}\right|^{p^{\prime}} \leq\left[\frac{1}{2}\left(|\omega_{1}|^p+|\omega_{2}|^p\right)\right]^{\frac{1}{p-1}}
\end{equation}
holds for all $\omega_{1}, \omega_{2} \in \mathbb{R}^{k}$.
If $p\geq 2$, then
\begin{equation}\label{Adams p>2}
\left|\frac{\omega_{1}+\omega_{2}}{2}\right|^p+\left|\frac{\omega_{1}-\omega_{2}}{2}\right|^p \leq \frac{1}{2}\left(|\omega_{1}|^p+|\omega_{2}|^p\right)
\end{equation}
holds for all $\omega_{1}, \omega_{2} \in \mathbb{R}^{k}$.
\end{lemma}
When $\omega_{1}, \omega_{2}\in\mathbb{C}$, Lemma \ref{Adams lemma} can be found in \cite[Lemma 2.37]{Adams2003Sobolev}.

\section{Acknowledgments}
This research is funded by the Science Committee of the Ministry of Science and Higher Education of  Kazakhstan (Grant No. AP19674900).  The author would like to thank his PhD advisor Prof. Durvudkhan Suragan for helpful discussions, and the  anonymous referees  for their constructive comments that improved the manuscript.

\section{Data availability}
The manuscript has no associated data.

\addcontentsline{toc}{chapter}{Bibliography}
\bibliography{refs}      

\bibliographystyle{abbrv}  

\end{document}